\documentclass[11pt,reqno]{amsart}

\usepackage{amssymb,amsthm,amscd,array}
\usepackage[all]{xy}

\let\ssection=\section
\renewcommand{\section}{\setcounter{equation}{0}\ssection}

\setcounter{footnote}{2}

\newcommand {\emptycomment}[1]{}

\setlength{\textwidth}{14.2truecm}
\setlength{\hoffset}{-1.0truecm}

\newcommand{\bbZ}{\mathbb{Z}}

\newcommand{\Hom}{\mathrm{Hom}}
\newcommand{\End}{\mathrm{End}}

\newcommand{\half}{\textstyle{\frac{1}{2}}}
\newcommand{\fg}{\mathfrak{g}}

\newcommand{\fa}{\mathfrak{a}}
\newcommand{\fb}{\mathfrak{b}}

\newcommand{\fn}{\mathfrak{n}}

\DeclareMathOperator{\Ker}{Ker}

%

\newtheorem{thm}{Theorem}
\newtheorem{lemma}{Lemma}[section]

\newtheorem{prop}[lemma]{Proposition}

\newtheorem{exe}[lemma]{Example}
\newtheorem{defi}[lemma]{Definition}

\def\a{\alpha}
\def\b{\beta}

\def\e{\varepsilon}

\def\r{\rho}

\allowdisplaybreaks

\begin{document}

\title{Crossed modules for Hom-Lie antialgebras}

\author[T. Zhang]{Tao Zhang}\thanks{Corresponding author(Tao Zhang):~~zhangtao@htu.edu.cn}
\address{College of Mathematics and Information Science\\
Henan Normal University\\
Xinxiang 453007, PR China}
\email{zhangtao@htu.edu.cn}

\author[H. Zhang]{Heyu Zhang}
\address{College of Mathematics and Information Science\\
Henan Normal University\\
Xinxiang 453007, PR China}
\email{zhy199404@126.com}

\date{}

\begin{abstract}
  In this paper, we introduced the concept of crossed modules for Hom-Lie antialgebras.
  It is proved that the category of crossed modules for Hom-Lie antialgebras and the category of  $Cat^1$-Hom-Lie antialgebras are equivalent to each other.  The relationship between the crossed modules extension of Hom-Lie antialgebras and the third cohomology group are investigated.
\end{abstract}

\subjclass[2010]{Primary 17D99; Secondary 18G60}

\keywords{Hom-Lie antialgebra; action; semidirect product; crossed module}

\maketitle

\thispagestyle{empty}


\section{Introduction}
The notion of Lie antialgebras was introduced by V. Ovsienko in \cite{Ovs} as an algebraic structure in the context of symplectic and contact geometry of $\bbZ_2$-graded space.  A Lie antialgebra is a $\bbZ_2$-graded vector space $\fa=\fa_0\oplus\fa_1$ where $\fa_0$ is a commutative associative algebra acting on $\fa_1$ as a derivation satisfying some compatible conditions, see Definition \ref{def001}.
The universal enveloping algebra and representation theory of Lie antialgebras have been investigated \cite{LM}.

Crossed modules of groups were introduced by Whitehead in the late 1940s as algebraic models for path-connected CW-spaces, see \cite{Wh}.
Crossed modules of Lie algebras and associative algebras have also been investigated by various authors, see \cite{Bau,Ell,Ger,KL,Lue,Wag}.
It is well known that equivalent classes of crossed module extensions of associative algebras and Lie algebras are in one-to-one correspondence with elements of their Chevally-Eilenberg and Hochschild third cohomology groups.
For crossed modules of algebroids, Leibniz algebras, Hom-Lie algebras, Hom-Lie-Rinehart algebras and internal crossed modules in  semi-abelian categories, see \cite{Alp,B1,B2,Cas,CM,Jan1,Jan2,Zhang}.

In a recent paper \cite{ZZ}, we introduced the concept of Hom-Lie antialgebras which is a Hom-analogue of Ovsienko's Lie antialgebras.
It worth mentioning that the concept of Hom-Lie algebras were invented by J. Hartwig, D. Larsson and S. Silvestrov \cite{HLS} in order to better understand $q$-deformations of the classical conformal algebras (such as the Virasoro algebra). Similarly,  Hom-Lie antialgebras can be seen as some deformations of conformal Lie antialgebras.
A thorough study of Hom-Lie antialgebra structures can also explain some properties of underlying Hom-Lie superalgebras.
The present investigation can bring some light in this direction.

Also in \cite{ZZ},  the general representations and cohomology theory of Hom-Lie antialgebras are investigated.
We proved that the equivalent classes of abelian extensions of Hom-Lie antialgebras are classified by the second cohomology group.
It is then natural to ask: What about the third cohomology group of Hom-Lie antialgebras?
Is there some crossed module extension structures of Hom-Lie antialgebras that can be related to third cohomology group?
In this paper, we give a partial answer to these questions.

The key point is to give a suitable definition of crossed module for Hom-Lie antialgebras.
To solve this problem, first we give a detailed study on the actions and semidirect products of Hom-Lie antialgebras.
Then we introduce the notion of crossed module for Hom-Lie antialgebras and $Cat^1$-Hom-Lie antialgebras.
It is proved that the category of crossed modules for Hom-Lie antialgebras and the category of $Cat^1$-Hom-Lie antialgebras are equivalent to each other.
Finally, we introduced the notion of crossed module extensions of Hom-Lie antialgebras.
Given an equivalent class of crossed module extensions, we prove that there is a canonical element in the third cohomology group.

The paper is organized as follows. In Section 2, we revisit some definitions and notations of Hom-Lie antialgebras. In Section 3, we study the representation and semidirect product of Hom-Lie antialgebras. In Section 4, we introduced the concept of crossed module for Hom-Lie antialgebras and $Cat^1$-Hom-Lie antialgebras.  We prove that the category of crossed modules for Hom-Lie antialgebras and the category of $Cat^1$-Hom-Lie antialgebras are equivalent to each other. In Section 5, we define crossed module extension of Hom-Lie antialgebras.
The relationship between the crossed module extension of Hom-Lie antialgebras and the third cohomology group are found.

Throughout this paper, we work with an algebraically closed field  of characteristic 0.
For a  $\bbZ_2$-graded vector space (also called super vector space) $V=V_0\oplus V_1$ , we consider the standard  $\bbZ_2$-grading on the algebra of linear maps on  $V$: $\End(V)=\End(V)_0\oplus \End(V)_1$ where $\End(V)_0=\Hom(V_0,V_0)\oplus\Hom(V_1,V_1)$ and $\End(V)_1=\Hom(V_0,V_1)\oplus\Hom(V_1,V_0)$.
\section{Hom-Lie antialgebras}\label{defintion}
\begin{defi}\label{def001}\cite{Ovs}
A Lie antialgebra is a supercommutative $\bbZ_2$-graded algebra
$\fa=\fa_0\oplus\fa_1$ such that the following identities hold:
\begin{eqnarray}
\label{AssCommT0}
x_1\cdot\left(x_2\cdot x_3\right)&=&\left(x_1\cdot x_2\right)\cdot x_3,\\
\label{CacT0}
x_1\cdot(x_2\cdot y_1)&=&\half(x_1\cdot x_2)\cdot y_1,\\
\label{ICommT0}
x_1\cdot[y_1,y_2]&=&[x_1\cdot y_1,\,  y_2]\;+\;[y_1,\, x_1\cdot y_2],\\
\label{Jack0}
y_1\cdot [y_2,y_3]&+&y_2 \cdot[y_3,y_1]\;+\;y_3\cdot [y_1,y_2]=0,
\end{eqnarray}
for all homogeneous elements $x_1,x_2,x_3\in\fa_0,~ y_1, y_2, y_3\in\fa_1$.
Note that, different from \cite{Ovs}, we denote by square brackets the product of homogeneous elements of degree 1, since it is skew symmetric.
\end{defi}

\begin{defi}\cite{Ovs}
Let $\fa$ and $\widetilde{\fa}$ be Lie antialgebras. A Lie antialgebra homomorphism $\phi$ from $\fa$ to $\widetilde{\fa}$ consists of two linear maps  $\phi_0:\fa_0\rightarrow\widetilde{\fa_0}$, $\phi_1:\fa_1\rightarrow\widetilde{\fa_1}$, such that the following conditions hold :
\begin{eqnarray*}
\label{Thomo1}
\phi_0(x_1\cdot x_2)&=&\phi_0(x_1)\cdot\phi_0(x_2),\\
\label{Thomo2}
\phi_1(x_1\cdot y_1)&=&\phi_0(x_1)\cdot\phi_1(y_1),\\
\label{Thomo3}
\phi_0([y_1,y_2])&=&[\phi_1(y_1),\phi_1(y_2)],
\end{eqnarray*}
for all $x_1,x_2,x_3\in\fa_0,~ y_1, y_2, y_3\in\fa_1$.
\end{defi}

\begin{defi}\label{def1}\cite{ZZ}
(1) A Hom-Lie antialgebra $(\fa,\a,\b)$ is a supercommutative $\bbZ_2$-graded algebra:
$\fa=\fa_0\oplus\fa_1$, together with two linear maps $(\alpha,\beta)$: $\alpha:\fa_0\rightarrow \fa_0,~\beta:\fa_1\rightarrow \fa_1$,
satisfying the following identities:
\begin{eqnarray}
\label{HLie-anti01}
\a(x_1)\cdot\left(x_2\cdot x_3\right)&=&\left(x_1\cdot x_2\right)\cdot\a(x_3),\\
\label{HLie-anti02}
\a(x_1)\cdot(x_2\cdot y_1)&=&\half(x_1\cdot x_2)\cdot\b(y_1),\\
\label{HLie-anti03}
\a(x_1)\cdot[y_1,y_2]&=&[(x_1\cdot y_1),\b(y_2)]\;+\;[\b(y_1),(x_1\cdot y_2)],\\
\label{HLie-anti04}
\b(y_1)\cdot[y_2,y_3]&+&\b(y_2)\cdot[y_3,y_1]\;+\;\b(y_3)\cdot[y_1,y_2]=0,
\end{eqnarray}
for all $x_1,x_2,x_3\in\fa_0,~ y_1, y_2, y_3\in\fa_1$.

(2) A Hom-Lie antialgebra is called multiplicative if $(\a,\b)$ form an algebraic homomorphism of $\fa$, i.e. for any $x_1, x_2\in\fa_0,~ y_1, y_2\in\fa_1$, we have
\begin{eqnarray*}
\a(x_1\cdot x_2)&=&\a(x_1)\cdot\a(x_2),\\
\b(x_1\cdot y_1)&=&\a(x_1)\cdot\b(y_1),\\
\a([y_1,y_2])&=&[\b(y_1),\b(y_2)].
\end{eqnarray*}

(3) A Hom-Lie subantialgebra $(\fa',\a',\b')$ of $(\fa,\a,\b)$ is a subspace $\fa'\subseteq \fa$ such that $(\fa',\a',\b')$  is itself a Hom-Lie antialgebra under operations of $\fa$ restrict to $\fa'$ and $\a'=\a|_{\fa'},\b'=\b|_{\fa'}$.
\end{defi}
The Hom-Lie antialgebras in this paper are assumed to be multiplicative.

\begin{exe}\cite{ZZ}
{\rm Consider the 3-dimensional Hom-Lie antialgebra $K_3$ as follows.
This algebra has the basis $\{\e; a,b\}$, where $\e$ is even and $a,b$ are odd,
Consider the linear map $(\a,\b):\fa\to \fa$ defined by
$$\a(\e)=\e, \quad \b(a)=\mu a,\quad \b(b)=\mu^{-1} b$$
on the basis elements, then we obtain a Hom-Lie antialgebra structure given by
\begin{eqnarray*}
\label{aslA}
\e\cdot{}\e=\e,\quad
\e\cdot{}a=\half\,\mu\, a,\quad
\e\cdot{}b=\textstyle{\frac{1}{2}}\mu^{-1} b,\quad
{[a,b]}=\half\,\e.
\end{eqnarray*}
}
\end{exe}

\begin{exe}\cite{ZZ}
\label{ExMain}
{\rm
Another example of a Hom-Lie antialgebra is the
\textit{conformal Hom-Lie antialgebra}  $K(1)$.
This is a simple infinite-dimensional Hom-Lie antialgebra with the basis
$$
\textstyle
\left\{
\e_n,\;n\in\bbZ;
~a_i, ~i\in\bbZ+\half
\right\},
$$
where $\e_n$ are even, $a_i$ are odd, and
$\a(\e_i)=\e_i$, $\b(a_i)=(1+q^i) a_i$
satisfy the following relations:
\begin{eqnarray*}
\label{GhosRel}
\e_n\cdot{}\e_m&=&\e_{n+m},\\
\e_n\cdot{}a_i &=&\half (1+q^i) a_{n+i},\\
{[a_i, a_j]}&=&\half\left(\{j\}-\{i\}\right)\e_{i+j},
\end{eqnarray*}
where $\{i\}=(q^i-1)/(q-1),\  q\neq 1$.
}
\end{exe}

\begin{defi}
Let $(\fa,\a,\b)$ and $(\widetilde{\fa},\widetilde{\a},\widetilde{\b})$ be Hom-Lie antialgebras. A Hom-Lie antialgebra homomorphism $\phi=(\phi_0,\phi_1)$ from $\fa$ to $\widetilde{\fa}$ consists of two linear maps  $\phi_0:\fa_0\rightarrow\widetilde{\fa_0}$, $\phi_1:\fa_1\rightarrow\widetilde{\fa_1}$, such that the following equalities hold for all $x_i\in\fa_0,~y_i\in\fa_1$:
\begin{eqnarray*}
\phi_0\circ\a&=&\widetilde{\a}\circ\phi_0,\\
\label{Thomo5}
\phi_1\circ\b&=&\widetilde{\b}\circ\phi_1,\\
\label{Thomo1}
\phi_0(x_1\cdot x_2)&=&\phi_0(x_1)\cdot\phi_0(x_2),\\
\label{Thomo2}
\phi_1(x_1\cdot y_1)&=&\phi_0(x_1)\cdot\phi_1(y_1),\\
\label{Thomo3}
\phi_0([y_1,y_2])&=&[\phi_1(y_1),\phi_1(y_2)].
\end{eqnarray*}
\end{defi}

\begin{defi}
Let $(\fa,\alpha,\beta)$ be a Hom-Lie antialgebra. A subalgebra $\fb$ of $\fa$ is a subspace of $\fa$, which is closed for the structural operations and invariant by $\alpha,\beta$, that is,
\begin{enumerate}
\item [(i)] $\fb_0 \cdot \fb_0\subseteq\fb_0, \fb_0 \cdot \fb_1\subseteq\fb_1, [\fb_1,\fb_1]\subseteq \fb_0$,
\item [(ii)] $\alpha(\fb_0)\subseteq \fb_0, \beta(\fb_1)\subseteq \fb_1$.
\end{enumerate}
We call a subalgebra $\fb$ of $\fa$ to be an ideal  if
\begin{enumerate}
\item [(iii)] $\fb_0 \cdot \fa_0\subseteq\fb_0,\, \fb_0 \cdot \fa_1\subseteq\fb_1,\, \fb_1 \cdot \fa_0\subseteq\fb_1$ and $[\fb_1, \fa_1]\subseteq \fb_0$.
\end{enumerate}
\end{defi}
\section{Semidirect product of Hom-Lie antialgebras}\label{Rep}
In this section, we introduce the concept of representation and action of Hom-Lie antialgebras.
Then we construct the semidirect product of Hom-Lie antialgebras.
\begin{defi}
Let $(\fa,\a,\b)$ be a Hom-Lie antialgebra, $V=V_0\oplus V_1$  be a Hom-super vector space (a super vector space with linear maps $\a_{V_0}\in\Hom(V_0,V_0),~\b_{V_1}\in\Hom(V_1,V_1)$). A representation of $(\fa,\a,\b)$ over the Hom-super vector space $V$ is a pair of linear maps $\r=(\r_0,\r_1):\r_0:\fa_0\rightarrow \End(V)_0,~\r_1:\fa_1\rightarrow \End(V)_1$ such that the following conditions hold:
\begin{eqnarray}
\label{01}
\a_{V_0}(\r_0(x_1)(u_1))&=&\r_0(\a(x_1))\a_{V_0}(u_1),\\
\label{02}
\b_{V_1}(\r_0(x_1)(w_1))&=&\r_0(\a(x_1))\b_{V_1}(w_1),\\
\label{03}
\a_{V_0}(\r_1(y_1)(u_1))&=&\r_0(\b(y_1))\a_{V_0}(u_1),\\
\label{04}
\b_{V_1}(\r_1(y_1)(w_1))&=&\r_0(\b(y_1))\b_{V_1}(w_1),\\
\label{a}
\r_0(\a(x_1))\circ\r_0(x_2)(u_1)&=&\r_0(x_1\cdot x_2)\circ\a_{V_0}(u_1),\\
\label{b}
\r_0(\a(x_1))\circ\r_0(x_2)(w_1)&=&\half\r_0(x_1\cdot x_2)\circ\b_{V_1}(w_1),\\
\label{c}
\r_0(\a(x_1))\circ\r_1(y_1)(u_1)&=&\half\r_1(\b(y_1))\circ\r_0(x_1)(u_1),\\
\label{d}
\r_1(x_1\cdot y_1)\circ\a_{V_0}(u_1)&=&\half\r_1(\b(y_1))\circ\r_0(x_1)(u_1),\\
\label{e}
\r_0(\a(x_1))\circ\r_1(y_1)(w_1)&=&\r_1(x_1\cdot y_1)\circ\b_{V_1}(w_1)+\r_1(\b(y_1))\circ\r_0(x_1)(w_1),\\
\label{f}
\r_0([y_1,y_2])\circ\a_{V_0}(u_1)&=&\r_1(\b(y_1))\circ\r_1(y_2)(u_1)-\r_1(\b(y_2))\circ\r_1(y_1)(u_1),\\
\label{g}
\r_0([y_1,y_2])\circ\b_{V_1}(w_1)&=&-\r_1(\b(y_1))\circ\r_1(y_2)(w_1)+\r_1(\b(y_2))\circ\r_1(y_1)(w_1),
\end{eqnarray}
for all $x_1,x_2\in\fa_0,~y_1,y_2\in\fa_1$, $u_1\in V_0,~w_1\in V_1$.
\end{defi}

It is easy to see that for any Hom-Lie antialgebra $(\fa,\a,\b)$,  there is a natural adjoint representation on itself and its ideal.
\begin{prop}\label{prop-rep}
Let $(\fa,\a,\b)$ be a Hom-Lie antialgebra, $(V,\a_{V_0},\b_{V_1})$ be a Hom-super vector space. Then
$\r=(\r_0,\r_1)$  is a representation of $(\fa,\a,\b)$ over $(V,\a_{V_0},\b_{V_1})$  if and only if $\fa\oplus V\triangleq(\fa_0\oplus V_0)\oplus(\fa_1\oplus V_1)$ is a Hom-Lie antialgebra under the following operations:
\begin{eqnarray}\label{homo10}
(\a+\a_{V_0})(x_1,u_1)&=&(\a(x_1),\a_{V_0}(u_1)),\\
\label{homo20}
(\b+\b_{V_1})(y_1,w_1)&=&(\b(y_1),\b_{V_1}(w_1)),\\
\label{op10}
(x_1,u_1)\cdot(x_2,u_2)&=&(x_1\cdot x_2,~ \r_0(x_1)(u_2)+\r_0(x_2)(u_1)),\\
\label{op20}
(x_1,u_1)\cdot(y_1,w_1)&=&(x_1\cdot y_1,~ \r_0(x_1)(w_1)+\r_1(y_1)(u_1)),\\
\label{op30}
[(y_1,w_1),(y_2,w_2)]&=&([y_1,y_2], \r_1(y_1)(w_2)-\r_1(y_2)(w_1)),
\end{eqnarray}
for all $x_1,x_2\in\fa_0,~y_1,y_2\in\fa_1$, $u_1,u_2\in V_0,~w_1,w_2\in V_1$.
\end{prop}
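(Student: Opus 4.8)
The statement is an equivalence, so the plan is to expand each defining condition of a multiplicative Hom-Lie antialgebra for the candidate structure $(\fa\oplus V,\a+\a_{V_0},\b+\b_{V_1})$, project it onto the $V$-summand, and match the resulting relations one by one with the eleven conditions \eqref{01}--\eqref{g}; the converse implication is then obtained by reading the same computations backwards. First I would record the purely formal facts: with the operations \eqref{op10}--\eqref{op30} the space $\fa\oplus V=(\fa_0\oplus V_0)\oplus(\fa_1\oplus V_1)$ is a supercommutative $\bbZ_2$-graded algebra --- the even product \eqref{op10} is symmetric and the odd bracket \eqref{op30} is skew-symmetric, because the analogous operations of $\fa$ have those symmetries and because the $V$-components $\r_0(x_1)(u_2)+\r_0(x_2)(u_1)$ and $\r_1(y_1)(w_2)-\r_1(y_2)(w_1)$ are symmetric and skew-symmetric respectively --- and $\a+\a_{V_0}$, $\b+\b_{V_1}$ are linear endomorphisms of the correct parity by \eqref{homo10}--\eqref{homo20}. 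Moreover, projecting any axiom of a Hom-Lie antialgebra onto the $\fa$-summand reproduces the corresponding axiom of $(\fa,\a,\b)$, which holds by hypothesis; hence the whole statement reduces to matching the $V$-components.

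For the three multiplicativity conditions I would substitute a generic pair of homogeneous elements of the relevant parities and compare $V$-components. The identity $(\a+\a_{V_0})\big((x_1,u_1)\cdot(x_2,u_2)\big)=(\a+\a_{V_0})(x_1,u_1)\cdot(\a+\a_{V_0})(x_2,u_2)$ reduces to \eqref{01}; the identity $(\b+\b_{V_1})\big((x_1,u_1)\cdot(y_1,w_1)\big)=(\a+\a_{V_0})(x_1,u_1)\cdot(\b+\b_{V_1})(y_1,w_1)$, after separating the part depending on $u_1$ from the part depending on $w_1$, reduces to \eqref{02} and \eqref{04}; and $(\a+\a_{V_0})\big([(y_1,w_1),(y_2,w_2)]\big)=[(\b+\b_{V_1})(y_1,w_1),(\b+\b_{V_1})(y_2,w_2)]$ reduces to \eqref{03}. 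Each step is reversible --- every operation in \eqref{op10}--\eqref{op30} is linear in each slot and the field has characteristic $0$, so one recovers a single representation axiom by setting all but one vector argument to zero --- and therefore multiplicativity of $(\a+\a_{V_0},\b+\b_{V_1})$ is equivalent to \eqref{01}--\eqref{04}.

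The core of the argument is the four structural identities \eqref{HLie-anti01}--\eqref{HLie-anti04}. Substituting $(x_i,u_i)$ into \eqref{HLie-anti01} and comparing $V_0$-components yields a relation whose part linear in $u_3$ is exactly \eqref{a}, the other two parts being consequences of \eqref{a} and the commutativity of $\fa_0$. Substituting $(x_1,u_1),(x_2,u_2),(y_1,w_1)$ into \eqref{HLie-anti02} and comparing $V_1$-components yields a relation whose parts linear in $w_1$, in $u_2$ and in $u_1$ are precisely \eqref{b}, \eqref{c} and \eqref{d}, with the factor $\half$ matching the one in \eqref{HLie-anti02}. Substituting $(x_1,u_1),(y_1,w_1),(y_2,w_2)$ into \eqref{HLie-anti03} and comparing $V_0$-components splits, according to the arguments $w_2$, $w_1$ and $u_1$, into two instances of \eqref{e} and one instance of \eqref{f}. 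Finally, substituting $(y_i,w_i)$ into the Jacobi-type identity \eqref{HLie-anti04} and comparing $V_1$-components splits, according to $w_1$, $w_2$ and $w_3$, into three cyclic instances of \eqref{g}. Summing these pieces back up shows conversely that \eqref{a}--\eqref{g} force each axiom of $\fa\oplus V$, and together with the previous two paragraphs this completes the proof.

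The computations are routine multilinear bookkeeping, and the only point I would flag as a (mild) obstacle is the polarization step: one must verify that, after substituting generic homogeneous elements, the $V$-component of a given axiom genuinely decouples into independent relations indexed by the individual vector arguments, so that each piece is equivalent to (not merely implied by) one of \eqref{01}--\eqref{g}. One must also keep careful track, via the $\bbZ_2$-degree of the product being expanded, of whether a particular $V$-component lies in $V_0$ or in $V_1$, since that fixes which of $\a_{V_0},\b_{V_1}$ occurs and which slot of $\r_0,\r_1$ is used.
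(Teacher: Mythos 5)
Your proposal is correct and is exactly the ``easy direct computation'' that the paper omits for this proposition (and carries out explicitly, with the extra $V$-products added, in the proof of Theorem~\ref{Thm1}): expand each axiom for $\fa\oplus V$, project to the $V$-summand, and polarize to recover \eqref{01}--\eqref{g}, with reversibility coming from linearity. The only bookkeeping quibble is your assignment of the $u_1$-part of the mixed multiplicativity identity to \eqref{04} rather than \eqref{03} (it involves $\r_1(y_1)(u_1)$, so it is \eqref{03}, while the bracket multiplicativity gives \eqref{04}), a discrepancy made harmless by the typos in the paper's own statements of those two conditions.
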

The proof of the above Proposition \ref{prop-rep} is by easy direct computations, so we omit the details.

From now on, we assume $(V,\alpha_{V_0},\beta_{V_1})$ is a representation of $(\fa,\alpha,\beta)$ unless otherwise stated.
If moreover $(V,\alpha_{V_0},\beta_{V_1})$ is itself a Hom-Lie antialgebra under some compatible conditions  with the representation conditions, we can construct a general Hom-Lie antialgebra $\fa\oplus V$ on the direct sum space as follows.

\begin{defi}
Let $(V,\alpha_{V_0},\beta_{V_1})$ and $(\fa,\alpha,\beta)$ be two Hom-Lie antialgebras.
An action of $(\fa,\alpha,\beta)$ over $(V,\alpha_{V_0},\beta_{V_1})$  is a representation
$(V,\rho)$ such that the following conditions hold:
\begin{eqnarray}
\label{action01}
\rho_0(\alpha(x_1))(u_1\cdot u_2)&=&\rho_0(x_1)(u_1)\cdot\alpha(u_2),\\
\label{action021}
\rho_0(\alpha(x_1))(u_1\cdot w_1)&=&\half\rho_0(x_1)(u_1)\cdot\beta_{V_1}(w_1),\\
\label{action022}
\rho_1(y_1)(u_2)\cdot\alpha_{V_0}(u_1)&=&\half\rho_1(\beta(y_1))(u_1\cdot u_2),\\
\label{action023}
\rho_0(x_1)(w_1)\cdot\alpha_{V_0}(u_1)&=&\half\rho_0(x_1)(u_1)\cdot\beta_{V_1}(w_1),\\
\label{action031}
\rho_0(\alpha(x_1))([w_1,w_2])&=&[\rho_0(x_1)(w_1),\beta_{V_1}(w_2)]+[\beta_{V_1}(w_1),\rho_0(x_1)(w_2)],\\
\label{action032}
\rho_1(\beta(y_1))(u_1\cdot w_1)&=&\alpha_{V_0}(u_1)\cdot\rho_1(y_1)(w_1)-[\rho_1(y_1)(u_1),\beta_{V_1}(w_1)],\\
\label{action04}
\rho_1(\beta(y_1))([w_1,w_2])&=&\beta_{V_1}(w_1)\cdot\rho_1(y_1)( w_2)-\beta_{V_1}(w_2)\cdot \rho_1(y_1)(w_1),
\end{eqnarray}
for all $x_1\in\fa_0,~y_1\in\fa_1$, $u_1,u_2\in V_0,~w_1,w_2\in V_1$.
\end{defi}

Now we can obtain our main construction of Hom-Lie antialgebra structure.
\begin{thm}\label{Thm1}
Let $(V,\alpha_{V_0},\beta_{V_1})$ and $(\fa,\alpha,\beta)$ be two Hom-Lie antialgebras with an action of $(\fa,\alpha,\beta)$ over $(V,\alpha_{V_0},\beta_{V_1})$.
 Then $(\fa\oplus V,\a+\a_{V_0},\b+\b_{V_1})$ is a Hom-Lie antialgebra under the following maps:
\begin{eqnarray}
\label{homo1}(\a+\a_{V_0})(x_1,u_1)&=&(\a(x_1),\a_{V_0}(u_1)),\notag\\
\label{homo2}(\b+\b_{V_1})(y_1,w_1)&=&(\b(y_1),\b_{V_1}(w_1)),\notag\\
\label{op1}
(x_1,u_1)\cdot(x_2,u_2)&=&(x_1\cdot x_2,~ \r_0(x_1)(u_2)+\r_0(x_2)(u_1)
+u_1\cdot u_2),\notag\\
\label{op2}
(x_1,u_1)\cdot(y_1,w_1)&=&(x_1\cdot y_1,~ \r_0(x_1)(w_1)+\r_1(y_1)(u_1)
+u_1\cdot w_1),\notag\\
\label{op3}
[(y_1,w_1),(y_2,w_2)]&=&([y_1,y_2],~ \r_1(y_1)(w_2)-\r_1(y_2)(w_1)
+[w_1,w_2]),\notag
\end{eqnarray}
for all $x_1,x_2\in\fa_0,~y_1,y_2\in\fa_1, u_1,u_2\in V_0, w_1,w_2\in V_1$.
This is called a semidirect product of $\fa$ and $V$,
denoted by $\fa\ltimes V$.
\end{thm}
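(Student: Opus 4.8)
The plan is to verify directly that $(\fa\oplus V,\a+\a_{V_0},\b+\b_{V_1})$ satisfies all the axioms of a multiplicative Hom-Lie antialgebra, namely the supercommutativity of the product together with identities \eqref{HLie-anti01}--\eqref{HLie-anti04} and the three multiplicativity conditions in Definition \ref{def1}(2). The key observation that makes this manageable is a \emph{bookkeeping by source}: each defining bracket on $\fa\oplus V$ splits as (original operation on $\fa$) $+$ (representation terms $\r_0,\r_1$) $+$ (original operation on $V$). So when we expand any one of the four Hom-Lie antialgebra identities on $\fa\oplus V$, the resulting expression organizes itself into three groups of terms: those living purely in $\fa$, those living purely in $V$, and the ``mixed'' terms involving $\r_0$ or $\r_1$. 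The purely-$\fa$ part vanishes because $(\fa,\a,\b)$ is a Hom-Lie antialgebra; the purely-$V$ part vanishes because $(V,\a_{V_0},\b_{V_1})$ is a Hom-Lie antialgebra; and the mixed part is exactly what the representation axioms \eqref{01}--\eqref{g} together with the action compatibility conditions \eqref{action01}--\eqref{action04} are designed to kill.

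Concretely, I would proceed identity by identity. Supercommutativity is immediate from the corresponding symmetry of $\cdot$ on $\fa$, on $V$, and of the pair $(\r_0,\r_1)$ in \eqref{op1}--\eqref{op3} (the odd-odd bracket picks up the sign built into $[\cdot,\cdot]$). For the multiplicativity conditions, e.g. $(\b+\b_{V_1})((x_1,u_1)\cdot(y_1,w_1))=(\a+\a_{V_0})(x_1,u_1)\cdot(\b+\b_{V_1})(y_1,w_1)$, expanding both sides and matching components reduces to the $\fa$-multiplicativity, the $V$-multiplicativity, and precisely the first four representation axioms \eqref{01}--\eqref{04} (which assert that $\r_0,\r_1$ intertwine $\a,\b$ with $\a_{V_0},\b_{V_1}$). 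For the Hom-associativity-type identity \eqref{HLie-anti01} applied to even elements $(x_i,u_i)$, the $\fa$-component is \eqref{HLie-anti01} for $\fa$; the $V$-component collects three sorts of terms: the ones with two $\r_0$'s (controlled by \eqref{a}), the ones with one $\r_0$ and one product $u\cdot u$ (controlled by \eqref{action01}), and the ones purely in $V$ (which is \eqref{HLie-anti01} for $V$). The remaining three identities \eqref{HLie-anti02}--\eqref{HLie-anti04} are handled the same way: for \eqref{HLie-anti02} one uses \eqref{b},\eqref{c},\eqref{d} among the representation axioms and \eqref{action021},\eqref{action022},\eqref{action023} among the action conditions; for \eqref{HLie-anti03} one uses \eqref{e},\eqref{f} (or \eqref{g}) and \eqref{action031},\eqref{action032}; and for the Hom-Jacobi identity \eqref{HLie-anti04} one uses \eqref{f},\eqref{g} and \eqref{action04} together with a cyclic-sum cancellation that already incorporates \eqref{Jack0} on both $\fa$ and $V$.

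The main obstacle is purely organizational rather than conceptual: each of the four identities must be checked on every combination of homogeneous arguments (all-even, two-even-one-odd in each slot, all-odd, etc.), and the mixed terms proliferate, so one must be careful to match each leftover term against exactly the right representation/action axiom and to track the factors of $\half$ and the signs coming from the skew-symmetric odd bracket. In practice this is best presented by treating one representative identity in full (say \eqref{HLie-anti02}, since it exhibits both a $\r_0$-with-$\r_1$ interaction and the $\half$ coefficients) and then remarking that the other three are entirely analogous; indeed the correspondence between leftover mixed terms and axioms is forced, because the action conditions \eqref{action01}--\eqref{action04} were reverse-engineered from exactly this computation. Hence the verification goes through and $\fa\ltimes V$ is a Hom-Lie antialgebra.
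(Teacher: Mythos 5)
Your proposal is correct and follows essentially the same route as the paper: a direct expansion of each of the four Hom-Lie antialgebra identities on $\fa\oplus V$, with the terms sorted into a pure-$\fa$ part, a pure-$V$ part, and mixed terms that are cancelled against the representation axioms \eqref{a}--\eqref{g} and the action conditions \eqref{action01}--\eqref{action04} (indeed, your matching of \eqref{action021}--\eqref{action023} to identity \eqref{HLie-anti02} is exactly the $D,E,F$ bookkeeping in the paper's proof). The only difference is presentational: the paper writes out all four identities explicitly rather than one representative case, and does not separately record the supercommutativity and multiplicativity checks you mention.
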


\begin{proof}
By definition, we have
\begin{eqnarray*}
&&(\a+\a_{V_0})((x_1,u_1))\cdot((x_2,u_2)\cdot(x_3,u_3))\\
&=&(\a(x_1),\a_{V_0}(u_1))\cdot((x_2,u_2)\cdot(x_3,u_3))\\
&=&(\a(x_1)\cdot(x_2\cdot x_3),
~\r_0(\a(x_1))(\r_0(x_2)(u_3))
+\r_0(\a(x_1))(\r_0(x_3)(u_2))\\
&&+\r_0(x_2\cdot x_3)(\a_{V_0}(u_1))
+\underbrace{\r_0(\a(x_1))(u_2\cdot u_3)}_A\\
&&+\underbrace{\a_{V_0}(u_1)\cdot\r_0(x_2)(u_3)}_B+\underbrace{\a_{V_0}(u_1)\cdot\r_0(x_3)(u_2)}_C+\a_{V_0}(u_1)\cdot(u_2\cdot u_3)
\end{eqnarray*}
and
\begin{eqnarray*}
&&((x_1,u_1)\cdot(x_2,u_2))\cdot(\a+\a_{V_0})((x_3,u_3))\\
&=&((x_1,u_1)\cdot(x_2,u_2))\cdot(\a(x_3),\a_{V_0}(u_3))\\
&=&((x_1\cdot x_2)\cdot \a(x_3),
~\r_0(x_1\cdot x_2)(\a_{V_0}(u_3))
+\r_0(\a(x_3))\r_0(x_1)(u_2)\\
&&+\r_0(\a(x_3))\r_0(x_2)(u_1)
+\underbrace{\r_0(\a(x_3))(u_1\cdot u_2)}_{C'}.\\
&&+\underbrace{\r_0(x_1)(u_2)\cdot\a_{V_0}(u_3)}_{A'}+\underbrace{\r_0(x_2)(u_1)\cdot\a_{V_0}(u_3)}_{B'}+(u_1\cdot u_2)\cdot\a_{V_0}(u_3).
\end{eqnarray*}
Due to \eqref{action01} and commutativity of product in $V_0$, we have
$$A=A', ~~B=B', ~~C=C'.$$
Thus we obtain
\begin{eqnarray}\label{semidirect01}\notag
&&(\a+\a_{V_0})((x_1,u_1))\cdot((x_2,u_2)\cdot(x_3,u_3))\\
&=&((x_1,u_1)\cdot(x_2,u_2))\cdot(\a+\a_{V_0})((x_3,u_3)).
\end{eqnarray}

For \eqref{HLie-anti02}, we have
\begin{eqnarray*}
&&(\a+\a_{V_0})((x_1,u_1))\cdot((x_2,u_2)\cdot(y_1,w_1))\\
&=&(\a(x_1),\a_{V_0}(u_1))\cdot((x_2,u_2)\cdot(y_1,w_1))\\
&=&(\a(x_1)\cdot(x_2,y_1),
~\r_0(\a(x_1))\r_0(x_2)(w_1)
+\r_0(\a(x_1))\r_1(y_1)(u_2)\\
&&+\r_1(x_2\cdot y_1)(\a_{V_0}(u_1))+\underbrace{\r_0(\a(x_1))(u_2\cdot w_1)}_D\\
&&+\underbrace{\a_{V_0}(u_1)\cdot\r_0(x_2)(w_1)}_{E}+\underbrace{\a_{V_0}(u_1)\cdot\r_1(y_1)(u_2)}_{F}+\a_{V_0}(u_1)\cdot(u_2\cdot w_1)
\end{eqnarray*}
\begin{eqnarray*}
&&(x_1,u_1)\cdot(x_2,u_2))\cdot(\b+\b_{V_1})(y_1,w_1)\\
&=&((x_1,u_1)\cdot(x_2,u_2))\cdot(\b(y_1),\b_{V_1}(w_1))\\
&=&((x_1,x_2)\cdot\b(y_1),
~\r_0(x_1\cdot x_2)(\b_{V_1}(w_1))
+\r_1(\b(y_1))\r_0(x_1)(u_2)\\
&&+\r_1(\b(y_1))\r_0(x_2)(u_1)
+\underbrace{\r_1(\b(y_1))(u_1\cdot u_2)}_{F'}\\
&&+\underbrace{\r_0(x_1)(u_2)\cdot\b_{V_1}(w_1)}_{D'}+\underbrace{\r_0(x_2)(u_1)\cdot\b_{V_1}(w_1)}_{E'}+(u_1\cdot u_2)\cdot\b_{V_1}(w_1))
\end{eqnarray*}
Due to \eqref{action021} and \eqref{action022}, we have
$$D=\half D',~ E=\half E', ~F=\half F'.$$
Thus we obtain
\begin{eqnarray}\label{semidirect02}\notag
&&(\a+\a_{V_0})((x_1,u_1))\cdot((x_2,u_2)\cdot(y_1,w_1))\\
&=&\half((x_1,u_1)\cdot(x_2,u_2))\cdot(\b+\b_{V_1})(y_1,w_1).
\end{eqnarray}

For \eqref{HLie-anti03}, we have
\begin{eqnarray*}
&&(\a+\a_{V_0})(x_1,u_1)\cdot[(y_1,w_1),(y_2,w_2)]\\
&=&(\a(x_1),\a_{V_0}(u_1))\cdot[(y_1,w_1),(y_2,w_2)]\\
&=&(\a(x_1)\cdot[y_1,y_2],
~\r_0(\a(x_1))\r_1(y_1)(w_2)
-\r_0(\a(x_1))\r_1(y_2)(w_1)\\
&&+\r_0([y_1,y_2])(\a_{V_0}(u_1))
+\underbrace{\r_0(\a(x_1))([w_1,w_2])}_{G}\\
&&+\underbrace{\a_{V_0}(u_1)\cdot\r_1(y_1)(w_2)}_{H}-\underbrace{\a_{V_0}(u_1)\cdot\r_1(y_2)(w_1)}_{I}+\a_{V_0}(u_1)\cdot[w_1,w_2])
\end{eqnarray*}
\begin{eqnarray*}
&&[(x_1,u_1)\cdot(y_1,w_1),(\b+\b_{V_1})(y_2,w_2)]\\
&=&[(x_1,u_1)\cdot(y_1,w_1)),(\b(y_2),\b_{V_1}(w_2))]\\
&=&\big([(x_1\cdot y_1),\b(y_2)],
~\r_1(x_1\cdot y_1)\cdot \b_{V_1}(w_2)
-\r_1(\b(y_2))\r_0(x_1)(w_1)\\
&&-\r_1(\b(y_2))\r_1(y_1)(u_1)
\underbrace{-\r_1(\b(y_2))(u_1\cdot w_1)}_{I_1}\\
&&+\underbrace{[\r_0(x_1)(w_1),\b_{V_1}(w_2)]}_{G_1}+\underbrace{[\r_1(y_1)(u_1),\b_{V_1}(w_2)]}_{H_1}+[u_1\cdot w_1,\b_{V_1}(w_2)]\big)
\end{eqnarray*}
\begin{eqnarray*}
&&[(\b+\b_{V_1})(y_1,w_1),(x_1,u_1)\cdot(y_2,w_2)]\\
&=&[(\b(y_1),\b_{V_1}(w_1)),(x_1,u_1)\cdot(y_2,w_2)]\\
&=&\big([\b(y_1),(x_1\cdot y_2)],
\r_1(\b(y_1))\r_0(x_1)(w_2)
-\r_1(x_1\cdot y_2)\b_{V_1}(w_1)\\
&&+\r_1(\b(y_1))\r_1(y_2)(u_1)
+\underbrace{\r_1(\b(y_1))(u_1\cdot w_2)}_{H_2}\\
&&+\underbrace{[\b_{V_1}(w_1),\r_0(x_1)(w_2)]}_{G_2}+\underbrace{[\b_{V_1}(w_1),\r_1(y_2)(u_1)]}_{I_2}+[\b_{V_1}(w_1),u_1\cdot w_2]\big)
\end{eqnarray*}
Due to \eqref{action031} and \eqref{action032}, we have
$$G=G_1+G_2,~ H=H_1+H_2, ~ I=I_1+I_2.$$
Thus we obtain
\begin{eqnarray}\label{semidirect03}\notag
&&(\a+\a_{V_0})(x_1,u_1)\cdot[(y_1,w_1),(y_2,w_2)]\\
\notag
&=&[(x_1,u_1)\cdot(y_1,w_1),(\b+\b_{V_1})(y_2,w_2)]\\
&&+[(\b+\b_{V_1})(y_1,w_1),(x_1,u_1)\cdot(y_2,w_2)].
\end{eqnarray}

For \eqref{HLie-anti04}, by definition \eqref{op3} we have
\begin{eqnarray*}
&&(\b+\b_{V_1})(y_1,w_1)\cdot[(y_2,w_2),(y_3,w_3)]\\
&=&(\b(y_1),\b_{V_1}(w_1))\cdot[(y_2,w_2),(y_3,w_3)]\\
&=&\big(\b(y_1)\cdot[y_2,y_3],
~\r_1(\b(y_1))(\r_1(y_2)(w_3))
-\r_1(\b(y_1))\r_1(y_3)(w_2)\\
&&+\r_0([y_2,y_3])(\b_{V_1}(w_1))+
\underbrace{\r_1(\b(y_1))([w_2,w_3])}_{J}\\
&&+\underbrace{\r_1(y_2)(w_3)\cdot\b_{V_1}(w_1)}_{K}-
\underbrace{\r_1(y_3)(w_2)\cdot\b_{V_1}(w_1)}_{L}+\b_{V_1}(w_1)\cdot[w_2,w_3]\big)
\end{eqnarray*}
\begin{eqnarray*}
&&(\b+\b_{V_1})(y_2,w_2)\cdot[(y_3,w_3),(y_1,w_1)]\\
&=&(\b(y_2),\b_{V_1}(w_2))\cdot[(y_3,w_3),(y_1,w_1)\\
&=&(\b(y_2)\cdot[y_3,y_1],
~\r_1(\b(y_2))(\r_1(y_3)(w_1))
-\r_1(\b(y_2))\r_1(y_1)(w_3)\\
&&+\r_0([y_3,y_1])(\b_{V_1}(w_2))+\underbrace{\r_1(\b(y_2))([w_3,w_1])}_{K_1}\\
&&+\underbrace{\r_1(y_3)(w_1)\cdot\b_{w_1}(w_2)}_{L_1}-\underbrace{\r_1(y_1)(w_3)\cdot\b_{V_1}(w_2)}_{J_1}+\b_{V_1}(w_2)\cdot[w_3,w_1]\big)
\end{eqnarray*}
\begin{eqnarray*}
&&(\b+\b_{V_1})(y_3,w_3)\cdot[(y_1,w_1),(y_2,w_2)]\\
&=&\big(\b(y_3),\b_{V_1}(w_3))\cdot[(y_1,w_1),(y_2,w_2)]\\
&=&\big(\b(y_3)\cdot[y_1,y_2],
~\r_1(\b(y_3))(\r_1(y_1)(w_2))
-\r_1(\b(y_3))\r_1(y_2)(w_1)\\
&&+\r_0([y_1,y_2])(\b_{V_1}(w_3))+\underbrace{\r_1(\b(y_3))([w_1,w_2])}_{L_2}\\
&&\underbrace{\r_1(y_1)(w_2))\cdot\b_{V_1}(w_3)}_{J_2}-\underbrace{\r_1(y_2)(w_1)\cdot\b_{V_1}(w_3)}_{K_2}+\b_{V_1}(w_3)\cdot[w_1,w_2]\big)
\end{eqnarray*}
Due to \eqref{action031} and \eqref{action032}, we have
$$J+J_1+J_2=0,~ K+K_1+K_2=0, ~L+L_1+L_2=0.$$
Thus we obtain
\begin{eqnarray}\label{semidirect04}\notag
&&(\b+\b_{V_1})(y_1,w_1)\cdot[(y_2,w_2),(y_3,w_3)]\\
\notag
&&+(\b+\b_{V_1})(y_2,w_2)\cdot[(y_3,w_3),(y_1,w_1)]\\
&&+(\b+\b_{V_1})(y_3,w_3)\cdot[(y_1,w_1),(y_2,w_2)]=0.
\end{eqnarray}
From \eqref{semidirect01}--\eqref{semidirect04}, we obtain that $\fa\ltimes V$ is a Hom-Lie antialgebra.
The proof is completed.
\end{proof}

\section{Crossed modules for Hom-Lie antialgebras}
In this section, we introduce the concept of crossed module for Hom-Lie antialgebras which can be constructed from semidirect product.
Then we establish the relationships between the category of crossed modules for Hom-Lie antialgebras and the category of $Cat^1$-Hom-Lie antialgebras.

\begin{defi}\label{CR} Let $(V,\alpha_{V_0},\beta_{V_1})$ and $(\fa,\alpha,\beta)$ be two Hom-Lie antialgebras with an action of  $(\fa,\alpha,\beta)$ over $(V,\alpha_{V_0},\beta_{V_1})$.
A crossed module of Hom-Lie antialgebras $\partial:(V,\alpha_{V_0},\beta_{V_1})\rightarrow(\fa,\alpha,\beta)$ is an Hom-Lie antialgebra homomorphism such that the following identities hold:
\begin{eqnarray}
\label{cm1}
\partial_0\circ\rho_0(x)(u_1)&=&x\cdot\partial_0(u_1),\\
\label{cm2}
\partial_1\circ\rho_0(x)(w_1)&=&x\cdot\partial_1(w_1),\\
\label{cm3}
\partial_1\circ\rho_1(y)(u_1)&=&y\cdot\partial_0(u_1),\\
\label{cm4}
\partial_0\circ\rho_1(y)(w_1)&=&[y,\partial_1(w_1)],\\
\label{pei1}
\rho_0(\partial_0(u_1))(u_2)&=&u_1\cdot u_2,\\
\label{pei2}
\rho_1(\partial_1(w_1))(w_2)&=&[w_1,w_2],\\
\label{pei3}
\rho_0(\partial_0(u_1))(w_1)&=&\rho_1(\partial_1(w_1))(u_1)=u_1\cdot w_1,
\end{eqnarray}
for all $x\in\fa_0$, $u_1,u_2\in V_0$, $y\in\fa_1$, $w_1,w_2\in V_1$.
\end{defi}

We will denote a crossed module by  $(V,\fa, \partial)$  or $\partial:V\rightarrow\fa$ in the following text for simplicity.

\begin{exe}
If $\fb$ is an ideal of a Hom-Lie antialgebra $\fa$, then $\fa$ acts on $\fb$ by the adjoint
representation. $(\fb,\fa, i)$ is a crossed module, where $i:\fb\rightarrow\fa$  is the inclusion map.
In particular, for any Hom-Lie antialgebra $\fa$, the triples $(\fa,\fa, id)$ and $(0,\fa, 0)$ are crossed modules.
\end{exe}

\begin{exe}\cite{ZZ2}
Let $(\fb,\a_{\fb_0},\b_{\fb_1})$, $(\fa,\a,\b)$ and $(\widetilde{\fa},\widetilde{\a},\widetilde{\b})$ be Hom-Lie antialgebras. An extension of $\fa$ by $\fb$ is a short exact sequence
\begin{equation*}
\xymatrix@C=0.5cm{
  0 \ar[r]^{} & \fb  \ar[rr]^{i } && \widetilde{\fa } \ar[rr]^{\pi} && \fa  \ar[r]^{} & 0 }
\end{equation*}
of Hom-Lie antialgebras. It is  called a central extension, if $\fb$ is contained in the center $Z(\widetilde{\fa})$ of $\widetilde{\fa}$.
Each central extension gives a crossed module $(\widetilde{\fa }, \fa,\pi)$ where $\pi$ is the projective map.
\end{exe}

\begin{defi}\label{COM}
Let $\partial:(V,\alpha_{V_0},\beta_{V_1})\rightarrow(\fa,\alpha,\beta)$ and $\widetilde{\partial}:(\widetilde{V},\widetilde{\alpha}_{V_0},\widetilde{\beta}_{V_1})\rightarrow(\widetilde{\fa},\widetilde{\alpha},\widetilde{\beta})$ be two crossed modules of Hom-Lie antialgebras. A morphism of crossed modules is a pair of Hom-Lie antialgebra homomorphisms $\phi: V\to \widetilde{V}$ and $\psi:\fa\to \widetilde{\fa}$ such that the following conditions hold:
\begin{eqnarray*}
\phi_0\circ\partial_0=\widetilde{\partial}_0\circ\phi_0,\quad\psi_1\circ\partial_1=\widetilde{\partial}_1\circ\psi_1.
\end{eqnarray*}
\end{defi}
We will denote by $\mathbf{XHLA}$ the category of crossed modules of Hom-Lie antialgebras.

\begin{defi}
A $Cat^1$-Hom-Lie antialgebra $\left(M,N,s,t\right)$ consists of a Hom-Lie antialgebra $(M,\alpha_{M_0},\beta_{M_1})$ together with a Hom-Lie subantialgebra $(N,\alpha_{N_0},\beta_{N_1})$ and the structural homomorphisms: $s=(s_0, s_1),~~t=(t_0,t_1):(M,\alpha_{M_0},\beta_{M_1})\rightarrow (N,\alpha_{N_0},\beta_{N_1})$ such that
\begin{eqnarray}
\label{cat01}&&s_0|_{N_0}=t_0|_{N_0}=id_{N_0},\\
\label{cat02}&&s_1|_{N_1}=t_1|_{N_1}=id_{N_1},\\
\label{cat1}&&\Ker s_0\cdot \Ker t_0=0,\\
\label{cat2}&&\Ker s_0\cdot \Ker t_1=0,\\
\label{cat3}&&\Ker t_0\cdot \Ker s_1=0,\\
\label{cat4}&&[\Ker s_1,\Ker t_1]=0,
\end{eqnarray}
where $id_{N_0}:{N_0}\to {N_0}$ and $id_{N_1}:{N_1}\to {N_1}$ are the identity maps,
$\Ker s$ and $\Ker t$ are the kernel spaces of $s$ and $t$.
\end{defi}

\begin{defi}
Let $\left(M,N,s,t\right)$ and $\left(\widetilde{M},\widetilde{N},\widetilde{s},\widetilde{t}\right)$ be two $Cat^1$-Hom-Lie antialgebras.
A morphism of $Cat^1$-Hom-Lie antiagebras $f:\left(M,N,s,t\right)\to\left(\widetilde{M},\widetilde{N},\widetilde{s},\widetilde{t}\right)$ is a Hom-Lie antialgebra homomorphism  $(f_0,f_1):(M,\alpha_{M_0},\beta_{M_1})\to (\widetilde{M},\widetilde{\alpha}_{M_0},\widetilde{\beta}_{M_1})$ such that $f_0(N_0)\subseteq \widetilde{N_0}$, $f_1(N_1)\subseteq \widetilde{N_1}$, and
 \begin{eqnarray*}
 \widetilde{s_0}\circ f_0&=&f_0|_{N_0}\circ s_0,\quad
 \widetilde{t_0}\circ f_0=f_0|_{N_0}\circ t_0, \\
 \widetilde{s_1}\circ f_1&=&f_1|_{N_1}\circ s_1,\quad
 \widetilde{t_1}\circ f_1=f_1|_{N_1}\circ t_1.
 \end{eqnarray*}
\end{defi}
We will denote by $\mathbf{CHLA}$ the category of $Cat^1$-Hom-Lie antialgebras.

\begin{exe}
Any Hom-Lie antialgebra $(\fa,\alpha,\beta)$ can be  considered as a $Cat^1$-Hom-Lie antialgebra with itself as its subalgebras and  $s=t=id_\fa$.
In this case, the kernel spaces $\Ker s$ and $\Ker t$ are  all zero, thus conditions \eqref{cat1}--\eqref{cat4} are natural satisfied.
\end{exe}

\begin{exe}\label{exe02}
Consider the 4-dimensional Hom-Lie antialgebra $M=\{\e;a_1, a_2, a_3\}$, where $\e$ is even and $a_1, a_2, a_3$ are odd,
and the linear map $(\a,\b):\fa\to \fa$ defined by
$$\a(\e)=\e, \quad \b(a_1)=\mu a_1,\quad \b(a_2)=\mu^{-1} a_2,\quad \b(a_3)=\mu a_3,$$
on the basis elements,  we obtain a Hom-Lie antialgebra structure given by
\begin{eqnarray*}
{[a_1,a_2]}=\e,\quad \e\cdot{}a_1=\mu a_3.
\end{eqnarray*}
Let $N=\{\e; a_1, a_2\}$ be its 3-dimensional subalgebra ${[a_1, a_2]}=\e$ with  projective map $\pi:M\to N$. There is an action of $N$ on $M$ in the natural way:
$$\rho_0(\e)(a_1)= \e\cdot{}a_1=\mu a_3,\quad\rho_1(a_1)(a_2)=[a_1,a_2]=\e,$$
and we obtain the semidirect product  $N\ltimes M$ with $N$ as its subalgebra by Theorem \ref{Thm1} in the above Section.
Now we define $s:N\ltimes M\to N$ and $t:N\ltimes M\to N$ by
\begin{eqnarray*}
&&s_0:N_0\ltimes M_0\to N_0, \quad (\e,\e')\mapsto \e\\
&&s_1:N_1\ltimes M_1\to N_1, \quad(a_i,a_j)\mapsto  a_i
\end{eqnarray*}
and
\begin{eqnarray*}
&&t_0:N_0\ltimes M_0\to N_0, \quad(\e,\e')\mapsto \e+\e'\\
&&t_1:N_1\ltimes M_1\to N_1,\quad (a_i,a_j)\mapsto  a_i+\pi(a_j)
\end{eqnarray*}
where $a_i\in N_1, i=1,2$ and $a_i\in M_1, i=1,2,3.$
Then we have $\Ker s_0=\{(0, \e)|\e\in M_0\}$ and $\Ker t_0=\{(\e,-\e)|\e\in M_0\}$ and
$\Ker s_1=\{(0, a_j)|a_j\in M_1\}$ and $\Ker t_1=\{(-\pi(a_j), a_j)|a_j\in M_1\}$.
It is easy to verify that $\Ker s_0\cdot \Ker t_0=0,\Ker s_0\cdot \Ker t_1=0$ and $\Ker t_0\cdot \Ker s_1=0, [\Ker s_1,\Ker t_1]=0.$
Therefore we obtain a $Cat^1$-Hom-Lie antialgebra with non-zero  kernel spaces $\Ker s$ and $\Ker t$.
\end{exe}

The construction in the above Example \ref{exe02} can be generalized to the following Theorem \ref{Thm02} which is the main result of this section.

\begin{thm}\label{Thm02}
The categories $\mathbf{XHLA}$ and $\mathbf{CHLA}$ are equivalent .
\end{thm}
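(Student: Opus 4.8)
The plan is to construct a pair of functors $\mathcal{C}: \mathbf{XHLA} \to \mathbf{CHLA}$ and $\mathcal{X}: \mathbf{CHLA} \to \mathbf{XHLA}$ and prove that they are mutually inverse up to natural isomorphism. First I would build $\mathcal{C}$: given a crossed module $\partial: V \to \fa$, form the semidirect product $M := \fa \ltimes V$ from Theorem \ref{Thm1} and take $N := \fa$ (identified with $\fa \oplus 0 \subseteq M$, which is a Hom-Lie subantialgebra). Define $s = (s_0,s_1): M \to N$ by $s_0(x,u) = x$, $s_1(y,w) = y$ (the projection), and $t = (t_0,t_1): M \to N$ by $t_0(x,u) = x + \partial_0(u)$, $t_1(y,w) = y + \partial_1(w)$. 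The first task is to check $s$ and $t$ are Hom-Lie antialgebra homomorphisms: for $s$ this is immediate from the formulas \eqref{op1}--\eqref{op3}; for $t$ one must use that $\partial$ is a homomorphism together with the crossed module identities \eqref{cm1}--\eqref{cm4} and the Peiffer-type identities \eqref{pei1}--\eqref{pei3} to match, e.g., $t_0((x_1,u_1)\cdot(x_2,u_2)) = (x_1+\partial_0 u_1)\cdot(x_2+\partial_0 u_2)$. Conditions \eqref{cat01}--\eqref{cat02} are clear since $s,t$ restrict to the identity on $N$. Then $\Ker s_0 = 0 \oplus V_0$, $\Ker t_0 = \{(-\partial_0 u, u) : u \in V_0\}$, and similarly in degree $1$; computing the relevant products using the semidirect product operations and identities \eqref{cm1}--\eqref{pei3} should collapse them to zero, giving \eqref{cat1}--\eqref{cat4}. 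On morphisms, a crossed module morphism $(\phi,\psi)$ induces $\psi \ltimes \phi: \fa \ltimes V \to \widetilde{\fa} \ltimes \widetilde{V}$, and one checks compatibility with $s,t,\widetilde{s},\widetilde{t}$.

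Next I would construct $\mathcal{X}$: given a $Cat^1$-Hom-Lie antialgebra $(M,N,s,t)$, set $V := \Ker s = (\Ker s_0) \oplus (\Ker s_1)$, which is an ideal of $M$, hence a Hom-Lie antialgebra with the restricted maps $\alpha_{M_0}, \beta_{M_1}$, and set $\partial := t|_V : V \to N$, i.e. $\partial_0 = t_0|_{\Ker s_0}$, $\partial_1 = t_1|_{\Ker s_1}$. The action of $N$ on $V$ is defined via the embedding $N \hookrightarrow M$ and the multiplication in $M$: $\rho_0(x)(u) = x \cdot u$, $\rho_0(x)(w) = x \cdot w$, $\rho_1(y)(u) = y \cdot u$, $\rho_1(y)(w) = [y,w]$ for $x \in N_0, y \in N_1, u \in V_0, w \in V_1$ (using that $V$ is an ideal so these land in $V$). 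One must verify this is an action (i.e. the representation axioms \eqref{01}--\eqref{g} and the action axioms \eqref{action01}--\eqref{action04}), which follows from $M$ being a Hom-Lie antialgebra and $N$ a subalgebra. Then check $\partial = t|_V$ is a Hom-Lie antialgebra homomorphism (it is a restriction of $t$) and that the crossed module axioms \eqref{cm1}--\eqref{pei3} hold. The identities \eqref{cm1}--\eqref{cm4} amount to equivariance of $t$, which holds because $t$ is a homomorphism fixing $N$ pointwise: e.g. $\partial_0(\rho_0(x)(u)) = t_0(x \cdot u) = t_0(x) \cdot t_0(u) = x \cdot \partial_0(u)$. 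The Peiffer identities \eqref{pei1}--\eqref{pei3} are exactly where \eqref{cat1}--\eqref{cat4} enter: for $u_1, u_2 \in \Ker s_0$, the element $u_1 - \rho_0(\partial_0 u_1)(u_2)$-type expression, when one notes $u_1 \in \Ker s_0$ and $\partial_0(u_1) - u_1 \in \Ker t_0$ (since $t_0(u_1 - \partial_0 u_1\cdot\text{(corrected)}) $ ... more precisely one writes $\partial_0(u_1)\cdot u_2 - u_1\cdot u_2 = (\partial_0(u_1) - u_1)\cdot u_2$ with the first factor... ) — the standard trick is that $s(u) = 0$ while $t(u) = \partial(u)$, so $u$ and $\partial(u)$ differ by an element of $\Ker t$ lying also appropriately, and the vanishing products kill the difference. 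This requires care but is the familiar computation from the Lie algebra case.

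Finally I would verify the equivalence. For $\mathcal{X} \circ \mathcal{C} \cong \Id$: starting from $\partial: V \to \fa$, we get $M = \fa \ltimes V$, $\Ker s_0 = 0 \oplus V_0 \cong V_0$, $\Ker s_1 = 0 \oplus V_1 \cong V_1$, and $t|_{\Ker s} (0,u) = \partial_0(u)$, recovering $(V,\fa,\partial)$ with its action on the nose (the semidirect product multiplication restricted to $0 \oplus V$ and the $\fa$-action match the original ones), so this composite is (isomorphic to) the identity functor. For $\mathcal{C} \circ \mathcal{X} \cong \Id$: starting from $(M,N,s,t)$, set $V = \Ker s$, $\partial = t|_V$; then $\mathcal{C}$ produces $N \ltimes \Ker s$ with its own $s', t'$. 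The natural isomorphism $M \xrightarrow{\sim} N \ltimes \Ker s$ is $m \mapsto (s(m), m - s(m))$, with inverse $(n,v) \mapsto n + v$; one checks this is a Hom-Lie antialgebra isomorphism intertwining $s$ with the projection and $t$ with $n + \partial(v) = s(m) + t(m - s(m)) = s(m) + t(m) - s(m)\cdot$... $= t(m)$ (using $t|_N = \id$ and linearity), hence it is an isomorphism of $Cat^1$-objects, natural in $(M,N,s,t)$.

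I expect the main obstacle to be the careful bookkeeping in proving the Peiffer identities \eqref{pei1}--\eqref{pei3} from \eqref{cat1}--\eqref{cat4} in the construction of $\mathcal{X}$, and dually that \eqref{cat1}--\eqref{cat4} hold for $\mathcal{C}(V,\fa,\partial)$ — in the super/Hom-graded setting one has \emph{three} mixed Peiffer conditions (\eqref{pei1}, \eqref{pei2}, and the double condition \eqref{pei3}) corresponding to the four kernel-product vanishing conditions \eqref{cat1}--\eqref{cat4}, and matching them up correctly while tracking which of $\alpha, \beta, \alpha_{V_0}, \beta_{V_1}$ appears where requires diligence. The homomorphism checks and the naturality of the two isomorphisms are routine given Theorem \ref{Thm1} and the definitions, so I would state those briefly and concentrate the written proof on the Peiffer/kernel correspondence.
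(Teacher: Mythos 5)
Your proposal follows essentially the same route as the paper: the functor $\mathbf{XHLA}\to\mathbf{CHLA}$ via the semidirect product $\fa\ltimes V$ with $s$ the projection and $t=(\mathrm{incl}+\partial)$, the inverse functor via $t|_{\Ker s}\colon\Ker s\to N$ with the action given by multiplication in $M$, and the Peiffer identities \eqref{pei1}--\eqref{pei3} extracted from \eqref{cat1}--\eqref{cat4} by the standard observation that $t(p)-p\in\Ker t$ for $p\in\Ker s$. Your sketch of the two natural isomorphisms (in particular $m\mapsto(s(m),m-s(m))$) is in fact spelled out more explicitly than in the paper, which leaves that final step to the reader; the approach is correct.
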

\begin{proof} The proof is divided into two parts. Firstly, we define a functor from the category $\mathbf{XHLA}$ to $\mathbf{CHLA}$.
 Given a crossed module $\partial:(V ,\alpha_{V_0},\beta_{V_1})\rightarrow(\fa,\alpha,\beta)$, the corresponding $Cat^1$-Hom-Lie antiagebra $(\fa\ltimes V, \fa,s,t)$ are constructed as follows.
 The Hom-Lie antiagebras are the semidirect product $(\fa\ltimes V,\a+\a_{V_0},\b+\b_{V_1})$ with $(\fa,\alpha,\beta)$ as its Hom-Lie subantiagebra.

 The structural homomorphisms $s$ and $t$ are given by:
 $$s_0(x,u)=x,\quad t_0(x,u)=\partial_0(u)+x$$ and
 $$s_1(y,w)=y,\quad t_1(y,w)=\partial_1(w)+y$$
 for all $x\in\fa_0, ~y\in\fa_1, ~u\in V_0, ~w\in V_1$.

It is easy to check that $s$ is indeed a homomorphism since $s_0$ and $s_1$ are the projection maps.

For $t$, we have
\begin{eqnarray*}
t_0\circ(\alpha+\alpha_{V_0})(x,u)&=&t_0(\alpha(x),\alpha_{V_0}(u))=\partial_0(\alpha_{V_0}(u))+\alpha(x)\\
\alpha\circ t_0(x,u)&=&\alpha(\partial_0(u)+\alpha(x))=\alpha(\partial_0(u))+\alpha(x)
\end{eqnarray*}
\begin{eqnarray*}
t_1\circ(\beta+\beta_{V_1})(y,w)&=&t_1(\beta(y),\beta_{V_1}(w))=\partial_1(\beta_{V_1}(w))+\beta(y)\\
\beta\circ t_1(y,w)&=&\beta(\partial_1(w)+\beta(y))=\beta(\partial_1(w))+\beta(y).
\end{eqnarray*}
By computations, we get
\begin{eqnarray*}
t_0\big((x_1,u_1)\cdot(x_2,u_2)\big)&=&t_0(x_1\cdot x_2,~ u_1\cdot u_2+\rho_0(x_1)(u_2)+\rho_0(x_2)(u_1))\\
&=&\partial_0(u_1\cdot u_2)+\partial_0\circ\rho_0(x_1)(u_2)+\partial_0\circ\rho_0(x_2)(u_1)+x_1\cdot x_2\\
t_0(x_1,u_1)\cdot t_0(x_2,u_2)&=&(\partial_0(u_1)+x_1)\cdot(\partial_0(u_2)+x_2)\\
&=&\partial_0(u_1)\cdot\partial_0(u_2)+\partial_0(u_1)\cdot x_2+x_1\cdot\partial_0(u_2)+x_1\cdot x_2.
\end{eqnarray*}
Since $\partial=(\partial_0,\partial_1)$ is a Hom-Lie antialgebra homomorphism and by the crossed module conditions \eqref{cm1} and \eqref{cm2}, we get
\begin{eqnarray*}
t_0\big((x_1,u_1)\cdot(x_2,u_2)\big)=t_0(x_1,u_1)\cdot t_0(x_2,u_2).
\end{eqnarray*}
Similarly, by the crossed module conditions \eqref{cm3} and \eqref{cm4}, we get
\begin{eqnarray*}
t_1\big((x_1,u_1)\cdot(y_1,w_1)\big)&=&t_0(x_1,u_1)\cdot t_1(y_1,w_1),\\
t_0\big([((y_1,w_1),(y_2,w_2)]\big)&=&[t_1(y_1,w_1),t_1(y_2,w_2)].
\end{eqnarray*}
Therefore, $t$ is a Hom-Lie antiagebra homomorphism.

Now we check that $s$ and $t$ satisfy conditions \eqref{cat01}--\eqref{cat02} and \eqref{cat1}--\eqref{cat3}.
In fact $\fa$ can be regarded as a Hom-Lie subantialgebra of $\fa\ltimes V $ via the inclusion map $i_0:x\rightarrow(x,0)$ and $i_1:y\rightarrow(y,0)$, so it is obvious that $s_0|_{\fa_0}=t_0|_{\fa_0}=id_{\fa_0}$ and $s_1|_{\fa_1}=t_0|_{\fa_1}=id_{\fa_1}$. From the definition of $s$ and $t$, we get that $\Ker s_0=\{(0,u)|u\in V_0\}\cong V_0$, $\Ker t_0=\{(-\partial_0(u),u)|u\in V_0\}$, $\Ker s_1=\{(0,w)|w\in V_1\}\cong V_1$ and $\Ker t_1=\{(-\partial_1(w),w)|w\in V_1\}$). Given $u_1,~u_2\in V_0$, due to condition \eqref{pei1}, we have
\begin{eqnarray*}
(0,u_1)\cdot(-\partial_0(u_2),u_2)&=&(0,~u_1\cdot u_2-\rho_0(\partial_0(u_2))(u_1)=0.
\end{eqnarray*}
Therefore, we get $\Ker s_0\cdot \Ker t_0=0$. One can also obtain $\Ker s_0\cdot \Ker t_1=0$, $\Ker t_0\cdot \Ker s_1=0$ and $[\Ker s_1,\Ker t_1]=0$ by conditions \eqref{pei2} and \eqref{pei3}.

Moreover, given a morphism $(\phi,\psi)$ of crossed modules, the corresponding morphism of $Cat^1$-Hom-Lie antialgebras is defined by $f_0(x,u)\triangleq(\psi_0(x),\phi_0(u))$, $f_1(y,w)\triangleq(\psi_1(y),\phi_1(w))$ for all $(x,u)\in(\fa\ltimes V)_0 ,~(y,w)\in(\fa\ltimes V)_1 $. One can check that $(f_0,f_1)$ is a homomorphisms of Hom-Lie antialgebras since the pair $(\phi,\psi)$ is a morphism of Hom-Lie antialgebra crossed modules. For example,
\begin{eqnarray*}
  &&f_0\big((x_1,u_1)\cdot(x_2,u_2)\big)\\
  &=& f_0(x_1\cdot x_2,~ u_1\cdot u_2+\rho_0(x_1)(u_2)+\rho_0(x_2)(u_1)\\
  &=&(\psi_0(x_1\cdot x_2),~\phi_0(u_1\cdot u_2+\rho_0(x_1)(u_2)+\rho_0(x_2)(u_1))\\
  &=&(\psi_0(x_1\cdot x_2),~\phi_0(u_1\cdot u_2)+\rho_0(\psi_0(x_1))(\phi_0(u_2))+\rho_0(\psi_0(x_2))(\phi_0(u_1))\\
  &=&(\psi_0(x_1),\phi_0(u_1))\cdot(\psi_0(x_2),\phi_0(u_2))\\
  &=&f_0(x_1,u_1)\cdot f_0(x_2,u_2).
  \end{eqnarray*}
Thus $f_0\big((x_1,u_1)\cdot(x_2,u_2)\big)=f_0(x_1,u_1)\cdot f_0(x_2,u_2)$.
It is clear that $f_0(\fa_0)\subseteq\widetilde{\fa_0},~f_1(\fa_1)\subseteq\widetilde{\fa_1}$. The equalities $\widetilde{s_0}\circ f_0=f_0|_{\fa_0}\circ s_0$, $\widetilde{s_1}\circ f_1=f_1|_{\fa_1}\circ s_1$ follow from the definition of $s_0,~s_1$. Moreover, the equalities $\widetilde{t_0}\circ f_0=f_0|_{\fa_0}\circ t_0$, $\widetilde{t_1}\circ f_1=f_1|_{\fa_1}\circ t_1$ are  consequence of the fact that $s$ and $t$ are Hom-Lie antialgebra homomorphisms. Therefore the functor from $\mathbf{XHLA}$ to $\mathbf{CHLA}$ is well defined.

For the second part, we define a functor from the category $\mathbf{CHLA}$ to $\mathbf{XHLA}$ as follows. Given a $Cat^1$-Hom-Lie antialgebra $\left(M, N,s,t\right)$, we define the corresponding crossed module of Hom-Lie antialgebras to be
 $t|_{\Ker s}:\Ker s \to N$ where $t_0|_{\Ker s_0}:\Ker s_0\rightarrow N_0$, $t_1|_{\Ker s_1}:\Ker s_1\rightarrow N_1$, with actions of $N$ on $\Ker s$ induced by the Hom-Lie antialgebra structure in $M$.  By the fact that $s_0(x\cdot p)=s_0(x)\cdot s_0(p)=0$, $s_1(x\cdot q)=s_0(x)\cdot s_1(q)=0$, $s_1(y\cdot p)=s_1(y)\cdot s_0(p)=0$ and $s_0([y, q])=[s_1(y), s_1(q)]=0$, we define
\begin{eqnarray*}
\rho_0(x)(p)&=&x\cdot p\in \Ker s_0,\quad \rho_1(y)(q)=[y, q]\in \Ker s_0,\\
\rho_0(x)(q)&=&x\cdot q\in \Ker s_1,\quad \rho_1(y)(p)=y\cdot p\in \Ker s_1.
\end{eqnarray*}
where $x\in N_0, y\in N_1, p\in \Ker s_0, q\in \Ker s_1$.
 Then $t|_{\Ker s}=(t_0|_{\Ker s_0},t_1|_{\Ker s_1})$ satisfy the crossed module conditions \eqref{cm1}--\eqref{cm4} since $t$ is a Hom-Lie antialgebra homomorphism and $t_0|_{N_0}=id_{N_0}, t_1|_{N_1}=id_{N_1}$.

For the crossed module conditions \eqref{pei1}--\eqref{pei3}, assume $p_1\in \Ker s_0$, $q_1\in \Ker s_1$, we have $t_0 (t_0 (p_1)-p_1)=t_0 (p_1)-t_0 (p_1)=0$ and $t_1 (t_1 (q_1)-q_1)=t_1 (q_1)-t_1 (q_1)=0$ since $t_0 (p_1)\in N_0$ and $t_1 (q_1)\in N_1$. Thus $t_0 (p_1)-p_1\in \Ker t_0 $ and $t_1 (q_1)-q_1\in \Ker t_1$. Thus we have
\begin{eqnarray*}
0&=&(t_0 (p_1)-p_1)\cdot p_2=t_0 (p_1)\cdot p_2-p_1\cdot p_2,\\
0&=&(t_0 (p_1)-p_1)\cdot q_1=t_0 (p_1)\cdot q_1-p_1\cdot q_1,\\
0&=&(t_1 (q_1)-q_1)\cdot p_1=t_1 (q_1)\cdot p_1-q_1\cdot p_1,\\
0&=&[t_1 (q_1)-q_1,q_2]=[t_1 (q_1),q_2]-[q_1,q_2].
\end{eqnarray*}
Therefore $t|_{\Ker s}:\Ker s \to N$ is a crossed module.

Moreover, given a morphism $f=(f_0,f_1)$ of $Cat^1$-Hom-Lie antialgebras, the corresponding morphism of  crossed modules is given by $(f|_{\Ker_{s}},~f|_{N})$. Since $\widetilde{s_0}\circ f_0=f_0|_{N_0}\circ s_0$ and $\widetilde{s_1}\circ f_1=f_1|_{N_1}\circ s_1$, we obtain that $f_0(\Ker s_0)\subseteq \Ker \widetilde{s_0}$ and $f_1(\Ker s_1)\subseteq \Ker \widetilde{s_1}$.  directly from the identities $\widetilde{t_0}\circ f_0=f_0|_{N_0}\circ t_0$ and $\widetilde{t_1}\circ f_1=f_1|_{N_1}\circ t_1$.
Thus the functor from $\mathbf{CHLA}$ to $\mathbf{XHLA}$ is well  defined.

Finally, one can see that the above two functors are inverse of each other up to isomorphim.
This completes the proof.
\end{proof}

\section{Crossed module extensions and third cohomology groups}

The cohomology theory of Hom-Lie antialgebras was defined in \cite{ZZ}.
We will define the notion of crossed module extensions of Hom-Lie antialgebras and show that they are related to the third cohomology group.

\begin{defi}
 Let $(\fg,\alpha,\beta)$ be a Hom-Lie antialgebra and $(M,\r)$ be a representation of $(\fg,\alpha,\beta)$.
 A crossed module extension of $(\fg,\alpha,\beta)$ by $(M,\r)$ is an exact sequence of Hom-Lie antialgebras:
\begin{equation}\label{exact1}
\xymatrix{
  0 \ar[r] & M  \ar[r]^{i} & V  \ar[r]^{\partial } & \fa  \ar[r]^{\pi } & \fg  \ar[r] & 0 },
\end{equation}
such that $\partial:V\rightarrow\fa$ is a crossed module and $M_0\cong \Ker\partial_0,~M_1\cong \Ker \partial_1$, $\fg_0\cong \mathrm{coKer} \partial_0,~\fg_1\cong \mathrm{coKer} \partial_1$.
\end{defi}

\begin{defi}
Two crossed module extensions $\partial:V\rightarrow\fa$  and $\widetilde{\partial}:\widetilde{V}\rightarrow\widetilde{\fa}$   are called equivalent if there are homomorphisms of Hom-Lie antialgebras $\phi:V\rightarrow\widetilde{V}$ and $\psi:\fa\rightarrow\widetilde{\fa}$ which are compatible with the actions such that the following diagram commutate:
 \begin{equation}
\xymatrix{
    0\ar[r]^{} & M  \ar@{=}[d]_{} \ar[r]^{i} & V  \ar[d]_{\phi } \ar[r]^{\partial } & \fa  \ar[d]_{\psi}  \ar[r]^{\pi } & \fg  \ar@{=}[d]_{} \ar[r]^{} & 0  \\
   0\ar[r]^{} & M  \ar[r]^{i'} & \widetilde{V}  \ar[r]^{\widetilde{\partial} } & \widetilde{\fa}  \ar[r]^{\pi' } & \fg  \ar[r]^{} & 0.}
   \end{equation}
\end{defi}
In other words, $\partial$ and $\widetilde{\partial}$ are equivalent if there are Hom-Lie antialgebra homomorphisms $\phi_0:V_0\rightarrow\widetilde{V}_0,~\phi_1:V_1\rightarrow\widetilde{V}_1$, such that $\widetilde{\partial}_0\circ\phi_0=\partial_0\circ\psi_0,
~\widetilde{\partial}_1\circ\phi_1=\partial_1\circ\psi_1$.
Let $\mathbf{CExt}(\fg,M)$ denote the set of equivalence classes of the set of crossed module extensions.

\begin{thm}
 Let $(\fg,\alpha,\beta)$ be a Hom-Lie antialgebra and $(M,\r)$ be a representation of $(\fg,\alpha,\beta)$.
  Then there is a canonical map:
\begin{equation*}
  \xi:\mathbf{CExt}(\fg,M)\rightarrow \mathbf{H^3}(\fg,M)
\end{equation*}
\end{thm}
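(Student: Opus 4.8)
The plan is to construct the class $\xi$ by the classical ``obstruction cochain'' method, adapted to the $\mathbb{Z}_2$-graded Hom-setting of \cite{ZZ}. Given a crossed module extension
\eqref{exact1}, write $\fk = \Ker\partial \cong M$ (so $\fk$ is a representation of $\fg$ via $\pi$), and note that $\partial(V)$ is an ideal of $\fa$ with $\fa/\partial(V)\cong\fg$. The first step is to choose, degree by degree, a linear section $\s=(\s_0,\s_1):\fg\to\fa$ of $\pi$ commuting with the structure maps $\a,\b$ (possible since we work over a field of characteristic $0$ and $\a,\b$ are bijective on the relevant pieces in the multiplicative case, or more safely by choosing $\s$ on a complement and averaging). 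Then for homogeneous $g,g'\in\fg$ the elements
$$
\s_0(g_1)\cdot\s_0(g_2)-\s_0(g_1\cdot g_2),\qquad
\s_0(g)\cdot\s_1(h)-\s_1(g\cdot h),\qquad
[\s_1(h),\s_1(h')]-\s_0([h,h'])
$$
lie in $\partial(V)$, so one may choose (again linearly, compatibly with $\a_{V_0},\b_{V_1}$) a lift $\theta:\fg\otimes\fg\to V$ of these ``non-multiplicativity'' terms, i.e. a $2$-cochain $\theta$ with values in $V$ such that $\partial\circ\theta$ equals the section-defect of $\s$.

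The second step is to produce the $3$-cocycle. Using the Hom-Lie antialgebra axioms \eqref{HLie-anti01}--\eqref{HLie-anti04} for $\fa$ and the crossed module identities \eqref{cm1}--\eqref{pei3}, one checks that the appropriate alternating/associator combination of $\theta$ — schematically
$$
\omega(g,g',g'') \;=\; \rho\big(\a(g)\big)\theta(g',g'') \;-\; \theta\big(g\cdot g',\,\b(g'')\big)\;+\;\cdots,
$$
the precise formula being the degree-$2$ Hom-antialgebra coboundary $\d\theta$ computed in $V$ — actually takes values in $\Ker\partial = M$, because $\partial(\d\theta)=\d(\partial\theta)=\d(\text{section-defect})=0$ by the fact that the section-defect of a linear section automatically satisfies the $3$-cocycle identity modulo $\partial(V)$ (this is the associativity/Jacobi pentagon for $\fa$ pushed down). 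Thus $\omega\in C^3(\fg,M)$, and a direct computation using $\d^2=0$ in $V$ together with $\partial$ being a homomorphism gives $\d\omega=0$, so $[\omega]\in \mathbf{H}^3(\fg,M)$. One then defines $\xi$ of the extension's class to be $[\omega]$.

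The third step is well-definedness: changing the section $\s$ by a linear map $\fg\to\partial(V)$, or changing the lift $\theta$ by a map $\fg\otimes\fg\to\Ker\partial=M$, changes $\omega$ by a coboundary $\d(\text{that 2-cochain})$, and a morphism $(\phi,\psi)$ of extensions transports one choice of $(\s,\theta)$ to an admissible choice for the other, so equivalent extensions give the same class. This shows $\xi$ is a well-defined map on $\mathbf{CExt}(\fg,M)$.

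The main obstacle I expect is purely bookkeeping: the $\mathbb{Z}_2$-grading forces the cochain $\theta$ and the cocycle $\omega$ to have several components (corresponding to the three products $\fa_0\cdot\fa_0$, $\fa_0\cdot\fa_1$, $[\fa_1,\fa_1]$), each governed by its own instance of \eqref{HLie-anti02}/\eqref{HLie-anti03}/\eqref{HLie-anti04}, and one must verify that the ``image in $M$'' and ``$\d\omega=0$'' claims hold in every component, using the matching crossed module conditions \eqref{pei1}--\eqref{pei3} to convert $V$-internal products into $\rho$-actions. Matching the resulting multi-component identity exactly with the definition of $C^3(\fg,M)$ and its differential from \cite{ZZ} — in particular getting the factors of $\half$ in \eqref{HLie-anti02}, \eqref{action021}--\eqref{action023} to line up — is where the real care is needed; everything else is the standard obstruction-theory argument.
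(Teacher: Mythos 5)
Your proposal follows essentially the same route as the paper: choose a linear section $s$ of $\pi$ and a section $\sigma$ of $\partial$ to lift the multiplicativity defect of $s$ to a $V$-valued $2$-cochain (your $\theta$, the paper's $\omega$), show that its coboundary (your $\omega$, the paper's $h$) lands in $M=\Ker\partial$ via the crossed-module identities \eqref{cm1}--\eqref{cm4}, verify $d h=0$, and prove independence of all choices by exhibiting explicit coboundaries. The only cosmetic difference is that you additionally require the section to intertwine $\alpha,\beta$ (with a justification---bijectivity or averaging---that the multiplicativity hypothesis does not actually supply), a condition the paper's proof does not impose.
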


\begin{proof}
Given a crossed module extension  of Hom-Lie antialgebra, we denote $\fn_0=\Ker\pi_0=\mathrm{Im}\partial_0$, $\fn_1=\Ker\pi_1=\mathrm{Im}\partial_1$. Choose linear sections $s_0:\fg_0\rightarrow\fa_0,~s_1:\fg_1\rightarrow\fa_1$ and $\sigma_0:\fn_0\rightarrow V_0,~\sigma_1:\fn_1\rightarrow V_1$ of $\pi_0,~\pi_1$ and $\partial_0,~\partial_1$  such that $\pi_0 s_0=\mathrm{id}_{\fg_0}$, $\pi_1 s_1=\mathrm{id}_{\fg_1}$, $\partial_0\sigma_0=\mathrm{id}_{\fn_0}$ and $\partial_1\sigma_1=\mathrm{id}_{\fn_1}$. Since $\pi$ is a Hom-Lie antialgebra homomorphism, then forall $a_1,a_2,a_3\in\fg_0,~b_1,b_2,b_3\in\fg_1$, we get
\begin{eqnarray*}
&&\pi_0\big(s_0(a_1)\cdot s_0(a_2)-s_0(a_1\cdot a_2)\big)=\pi_0s_0(a_1)\cdot \pi_0s_0(a_2)-\pi_0s_0(a_1\cdot a_2)=0,\\
&&\pi_1\big(s_0(a_1)\cdot s_1(b_1)-s_1(a_1\cdot b_1)\big)=\pi_0s_0(a_1)\cdot \pi_1s_1(b_1)-\pi_1s_1(a_1\cdot b_1)=0,\\
&&\pi_0\big([s_1(b_1), s_1(b_2)]-s_0([b_1, b_2])\big)=[\pi_1s_1(b_1), \pi_1s_1(b_2)]-\pi_0s_0([b_1, b_2])=0.
\end{eqnarray*}
Thus we have $s_0(a_1)\cdot s_0(a_2)-s_0(a_1\cdot a_2)\in \fn_0$, $s_0(a_1)\cdot s_1(b_1)-s_1(a_1\cdot b_1)\in \fn_1$ and $[s_1(b_1), s_1(b_2)]-s_0([b_1, b_2])\in \fn_0$.

Take $\omega_0:\fg_0\times \fg_0\rightarrow V_0,~\omega_1:\fg_0\times \fg_1\rightarrow V_1,~\omega_2:\fg_1\times \fg_1\rightarrow V_0$ as follows:
\begin{eqnarray*}
\omega_0(a_1,a_2)&=&\sigma_0\big(s_0(a_1)\cdot s_0(a_2)-s_0(a_1\cdot a_2)\big),\\
\omega_1(a_1,b_1)&=&\sigma_1\big(s_0(a_1)\cdot s_1(b_1)-s_1(a_1\cdot b_1)\big),\\
\omega_2(b_1,b_2)&=&\sigma_0\big([s_1(b_1), s_1(b_2)]-s_0([b_1, b_2])\big),
\end{eqnarray*}
for all $a_1,a_2\in \fg_0, b_1,b_2\in \fg_1$.
We define maps
\begin{eqnarray*}
 h_0(a_1,a_2,a_3)&=&\r_0(s_0(\alpha(a_1)))\omega_0(a_2,a_3)+\omega_0(\alpha(a_1),a_2\cdot a_3)\\
&&-\r_0(s_0(\alpha(a_3)))\omega_0(a_1,a_2)-\omega_0(a_1\cdot a_2,\alpha(a_3)),
\end{eqnarray*}
\begin{eqnarray*}
h_1(a_1,a_2,b_1)&=&\r_0(s_0(\alpha(a_1)))\omega_1(a_2,b_1)+\omega_1(\alpha(a_1),a_2\cdot b_1)\\
&&-\half\r_1(s_1(\beta(b_1)))\omega_0(a_1,a_2)-\half\omega_1(a_1\cdot a_2,\beta(b_1)),
\end{eqnarray*}
\begin{eqnarray*}
h_2(a_1,b_1,b_2)&=&\r_0(s_0(\alpha(a_1)))\omega_2(b_1,b_2)+\omega_0(\alpha(a_1),[b_1,b_2])\\
&&-\r_1(s_1(\beta(b_2)))\omega_1(a_1,b_1)-\omega_2(a_1\cdot b_1,\beta(b_2))\notag\\
 &&-\r_1(s_1(\beta(b_1)))\omega_1(a_1,b_2)-\omega_2(\beta(b_1),a_1\cdot b_2),
\end{eqnarray*}
\begin{eqnarray*}
h_3(b_1,b_2,b_3)&=&\r_1(s_1(\beta(b_1)))\omega_2(b_2,b_3)+\omega_1(\beta(b_1),[b_2,b_3])\\
&&+\r_1(s_1(\beta(b_2)))\omega_2(b_3,b_1)+\omega_1(\beta(b_2),[b_3,b_1])\\
&&+\r_1(s_1(\beta(b_3)))\omega_2(b_1,b_2)+\omega_1(\beta(b_3),[b_1,b_2]).
\end{eqnarray*}

Now check that the image of those maps are contained in the kernel of $\partial$.
By definition we have
\begin{eqnarray*}
   &&\partial_0h_0(a_1,a_2,a_3)\\
   &=&\partial_0\big(\r_0(s_0(\alpha(a_1)))\omega_0(a_2,a_3)\big)+\partial_0(\omega_0(\alpha(a_1),a_2\cdot a_3))\\
   &&-\partial_0(\r_0(s_0(\alpha(a_3)))\omega_0(a_1,a_2))-\partial_0(\omega_0(a_1\cdot a_2,\alpha(a_3)))\\
   &=&\partial_0\big\{\r_0(s_0(\alpha(a_1)))\sigma_0(s_0(a_2)\cdot s_0(a_3)-s_0(a_2\cdot a_3))\big\}\\
   &&+\partial_0\big\{\sigma_0(s_0(\alpha(a_1))\cdot s_0(a_2\cdot a_3)-s_0(\alpha(a_1))\cdot(a_2\cdot a_3))\big\}\\
   &&-\partial_0\big\{\r_0(s_0(\alpha(a_3)))\sigma_0(s_0(a_1)\cdot s_0(a_2)-s_0(a_1\cdot a_2))\big\}\\
   &&-\partial_0\big\{\sigma_0(s_0(a_1\cdot a_2)\cdot s_0(\alpha(a_3))-s_0((a_1\cdot a_2)\cdot(\alpha(a_3)))\big\}
\end{eqnarray*}
due to \eqref{cm1}, we get
\begin{eqnarray*}
  &&\partial_0h_0(a_1,a_2,a_3) \\
  &=& s_0(\alpha(a_1))\cdot (s_0(a_2)\cdot s_0(a_3))-s_0(\alpha(a_1))\cdot s_0(a_2\cdot a_3)\\
  &&+s_0(\alpha(a_1))\cdot s_0(a_2\cdot a_3)-s_0(\alpha(a_1)\cdot(a_2\cdot a_3))\\
  &&-s_0(\alpha(a_3))\cdot (s_0(a_1)\cdot s_0(a_2))+s_0(\alpha(a_3))\cdot s_0(a_1\cdot a_2)\\
  &&-s_0(a_1\cdot a_2)\cdot s_0(\alpha(a_3))+s_0((a_1\cdot a_2)\cdot\alpha(a_3))\\
  &=& s_0(\alpha(a_1))\cdot (s_0(a_2)\cdot s_0(a_3))-s_0(\alpha(a_1)\cdot(a_2\cdot a_3))\\
  &&- s_0(\alpha(a_3))\cdot (s_0(a_1)\cdot s_0(a_2))+s_0((a_1\cdot a_2)\cdot\alpha(a_3))\\
  &=&0.
\end{eqnarray*}
Thus $h_0(a_1,a_2,a_3)\in M_0=\Ker(\partial_0)$.

Similarly, due to \eqref{cm2},\eqref{cm3}, we have
\begin{eqnarray*}
  &&\partial_1h_1(a_1,a_2,b_1) \\
  &=& s_0(\alpha(a_1))\cdot(s_0(a_2)\cdot s_1(b_1))-s_1(\alpha(a_1)\cdot(a_2\cdot b_1))\\
  &&-\half s_1(\beta(b_1))\cdot(s_0(a_1)\cdot s_0(a_2))+\half s_1((a_1\cdot a_2)\cdot\beta(b_1))\\
  &=&0.
\end{eqnarray*}
Due to \eqref{cm1},\eqref{cm4}, we have
\begin{eqnarray*}
  &&\partial_0h_2(a_1,b_1,b_2) \\
  &=& s_0(\alpha(a_1))\cdot [s_1(b_1), s_1(b_2)]-s_0(\alpha(a_1)\cdot[b_1, b_2])\\
  &&- [s_1(\beta(b_2)),s_0(a_1)\cdot s_1(b_1)]+s_0[a_1\cdot b_1,\beta(b_2)]\\
  &&- [s_1(\beta(b_1)),s_0(a_1)\cdot s_1(b_2)]+s_0[\beta(b_1),a_1\cdot b_2]\\
  &=&0
\end{eqnarray*}
and
\begin{eqnarray*}
  &&\partial_1h_3(b_1,b_2,b_3) \\
  &=& s_1(\beta(b_1))\cdot [s_1(b_2), s_1(b_3)]-s_1(\beta(b_1)\cdot[b_2, b_3])\\
  &&+ s_1(\beta(b_2))\cdot [s_1(b_3), s_1(b_1)]-s_1(\beta(b_2)\cdot[b_3, b_1])\\
  &&+ s_1(\beta(b_3))\cdot [s_1(b_1), s_1(b_2)]-s_1(\beta(b_3)\cdot[b_1, b_2])\\
  &=&0.
\end{eqnarray*}
 Therefore we have $h_2(a_1,b_1,b_2)\in M_0$ $=\Ker(\partial_0)$ and $h_1(a_1,a_2,b_1)$, $h_3(b_1,b_2,b_3)\in M_1$$=\Ker(\partial_1)$. Thus we defined a map $h:\wedge^3\fg\rightarrow M$, that is to say $h\in C^3(\fg,M)$. Routine but complicated calculations show that $d(h)=0$. Hence the map $h$ defines a 3-cocycle in the cohomology of $\fg$ with coefficients in $M$.

 Now we are going to check that $\xi$ is a well-defined map, i.e. the equivalent class of $h$ does not depend on the sections $s,\sigma$.

 First we show that the class of $h$ does not depend on the sections $s$. Suppose $\overline{s}:\fg\rightarrow\fa$ is another section of $\pi$ and let $\overline{h}$ be the corresponding 3-cocycle defined using $\overline{s}$ instead of $s$. Then there exist linear maps $f_0:\fg_0\rightarrow V_0$ and $f_1:\fg_1\rightarrow V_1$ with $s-\overline{s}=\partial f$.

Due to \eqref{pei1},we have
 \begin{eqnarray*}
&&\r_0(s_0(\alpha(a_1)))\omega_0(a_2,a_3)-\r_0(\overline{s}_0(\alpha(a_1)))\overline{\omega}_0(a_2,a_3)\\
&=&\r_0(s_0(\alpha(a_1)))\omega_0(a_2,a_3)-\r_0(\overline{s}_0(\alpha(a_1)))\omega_0(a_2,a_3)\\
&&+\r_0(\overline{s}_0(\alpha(a_1)))\omega_0(a_2,a_3)-\r_0(\overline{s}_0(\alpha(a_1)))\overline{\omega}_0(a_2,a_3)\\
&=&\r_0((s_0-\overline{s}_0)(\alpha(a_1)))\omega_0(a_2,a_3)+
\r_0(\overline{s}_0(\alpha(a_1)))\big((\omega_0-\overline{\omega}_0)(a_2,a_3)\big)\\
&=&\r_0(\partial_0f_0(\alpha(a_1)))\omega_0(a_2,a_3)+
\r_0(\overline{s}_0(\alpha(a_1)))\big((\omega_0-\overline{\omega}_0)(a_2,a_3)\big)\\
&=&f_0(\alpha(a_1))\cdot\big((s_0(a_2)\cdot s_0(a_3)-s_0(a_2\cdot a_3))\big)\\
&&+\r_0(\overline{s}_0(\alpha(a_1)))\big((\omega_0-\overline{\omega}_0)(a_2,a_3)\big),
 \end{eqnarray*}
then
\begin{eqnarray*}
&&(h_0-\overline{h}_0)(a_1,a_2,a_3)\\
&=& f_0(\alpha(a_1))\cdot\big((s_0(a_2)\cdot s_0(a_3)-s_0(a_2\cdot a_3))\big)\\
&&+\r_0(\overline{s}_0(\alpha(a_1)))\big((\omega_0-\overline{\omega}_0)(a_2,a_3)\big)\\
&&-f_0(\alpha(a_3))\cdot\big((s_0(a_1)\cdot s_0(a_2)-s_0(a_1\cdot a_2))\big)\\
&&-\r_0(\overline{s}_0(\alpha(a_3)))\big((\omega_0-\overline{\omega}_0)(a_1,a_2)\big)\\
&&+(\omega_0-\overline{\omega}_0)(\alpha(a_1),a_2\cdot a_3)-(\omega_0-\overline{\omega}_0)(a_1\cdot a_2,\alpha(a_3)).
\end{eqnarray*}
Due to \eqref{pei3}, we have
\begin{eqnarray*}
&&(h_1-\overline{h}_1)(a_1,a_2,b_1)\\
&=& f_0(\alpha(a_1))\cdot\big((s_0(a_2)\cdot s_1(b_1)-s_1(a_2\cdot b_1))\big)\\
&&+\r_0(\overline{s}_0(\alpha(a_1)))\big((\omega_1-\overline{\omega}_1)(a_2,b_1)\big)\\
&&-\half f_1(\beta(b_1))\cdot\big((s_0(a_1)\cdot s_0(a_2)-s_0(a_1\cdot a_2))\big)\\
&&-\half \r_1(\overline{s}_1(\beta(b_1)))\big((\omega_0-\overline{\omega}_0)(a_1,a_2)\big)\\
&&+(\omega_1-\overline{\omega}_1)(\alpha(a_1),a_2\cdot b_1)-\half(\omega_1-\overline{\omega}_1)(a_1\cdot a_2,\beta(b_1)).
\end{eqnarray*}
Due to \eqref{pei2} and \eqref{pei3}, we have
\begin{eqnarray*}
&&(h_2-\overline{h}_2)(a_1,b_1,b_2)\\
&=& f_0(\alpha(a_1))\cdot\big([s_1(b_1), s_1(b_2)]-s_0[b_1, b_2]\big)\\
&&+\r_0(\overline{s}_0(\alpha(a_1)))\big((\omega_2-\overline{\omega}_2)(b_1,b_2)\big)\\
&&-[f_1(\beta(b_2)),s_0(a_1)\cdot s_1(b_1)-s_1(a_1\cdot b_1)]\\
&&-\r_1(\overline{s}_1(\beta(b_1)))\big((\omega_1-\overline{\omega}_1)(a_1,b_1)\big)\\
&&-[f_1(\beta(b_1)),s_0(a_1)\cdot s_1(b_2)-s_1(a_1\cdot b_2)]\\
&&-\r_1(\overline{s}_1(\beta(b_1)))\big((\omega_1-\overline{\omega}_1)(a_1,b_2)\big)\\
&&+(\omega_0-\overline{\omega}_0)(\alpha(a_1),[b_1, b_2])-(\omega_2-\overline{\omega}_2)(a_1\cdot b_1,\beta(b_2))\\
&&-(\omega_2-\overline{\omega}_2)(\beta(b_1),a_1\cdot b_2).
\end{eqnarray*}
Due to \eqref{pei2}, we have
\begin{eqnarray*}
  &&(h_3-\overline{h}_3)(b_1,b_2,b_3)\\
&=& f_1(\beta(b_1))\cdot\big([s_1(b_2), s_1(b_3)]-s_0[b_2, b_3]\big)\\
&&+\r_1(\overline{s}_1(\beta(b_1)))\big((\omega_2-\overline{\omega}_2)(b_2,b_3)\big)\\
&&+f_1(\beta(b_2))\cdot\big([s_1(b_3), s_1(b_1)]-s_0[b_3, b_1]\big)\\
&&+\r_1(\overline{s}_1(\beta(b_2)))\big((\omega_2-\overline{\omega}_2)(b_3,b_1)\big)\\
&&+f_1(\beta(b_3))\cdot\big([s_1(b_1), s_1(b_2)]-s_0[b_1, b_2]\big)\\
&&+\r_1(\overline{s}_1(\beta(b_3)))\big((\omega_2-\overline{\omega}_2)(b_1,b_2)\big)\\
&&+(\omega_1-\overline{\omega}_1)(\beta(b_1),[b_2, b_3])+(\omega_1-\overline{\omega}_1)(\beta(b_2), [b_3, b_1])\\
&&+(\omega_1-\overline{\omega}_1)(\beta(b_3),[b_1, b_2]).
\end{eqnarray*}
Next we define maps $\lambda:\wedge^2\fg\rightarrow V$ by
\begin{eqnarray*}
 \lambda_0(a_1,a_2)&=&\r_0(\overline{s}_0(a_1))f_0(a_2)+\r_0(\overline{s}_0(a_2))f_0(a_1)
 -f_0(a_1)\cdot (\partial_0f_0)(a_2)-f_0(a_1\cdot a_2)\\
  \lambda_1(a_1,b_1)&=&\r_0(\overline{s}_0(a_1))f_1(b_1)+\r_1(\overline{s}_1(b_1))f_0(a_1)
 -f_0(a_1)\cdot (\partial_1f_1)(b_1)-f_1(a_1\cdot b_1)\\
 \lambda_2(b_1,b_2)&=&\r_1(\overline{s}_1(b_1))f_1(b_2)+\r_1(\overline{s}_1(b_2))f_1(b_1)
 -f_1(b_1)\cdot (\partial_1f_1)(b_2)-f_0(b_1\cdot b_2)
\end{eqnarray*}
Then an easy caculation shows that  $\partial\lambda=\partial(\omega-\overline{\omega})$, hence $(\omega-\overline{\omega}-\lambda):\wedge^2\fg\rightarrow M$.

Moreover, if we replace $\omega_0-\overline{\omega}_0$ by $\lambda_0$, then we have
\begin{eqnarray}\label{app}
(h_0-\overline{h}_0)(a_1,a_2,a_3)=d (\omega_0-\overline{\omega}_0-\lambda_0)(a_1,a_2,a_3).
\end{eqnarray}
See Appendix at the end of this paper for the detailed proof of equation \eqref{app}.

Similarly, if we replace $\omega_0-\overline{\omega}_0$ by $\lambda_0,~\omega_1-\overline{\omega}_1$ by $\lambda_1$, then we get
$  (h_1-\overline{h}_1)=d (\omega_0-\overline{\omega}_0-\lambda_0)+d (\omega_1-\overline{\omega}_1-\lambda_1).$

If we replace $\omega_0-\overline{\omega}_0$ by $\lambda_0,~\omega_1-\overline{\omega}_1$ by $\lambda_1,~\omega_2-\overline{\omega}_2$ by $\lambda_2$, then we get
$(h_2-\overline{h}_2)=
d (\omega_0-\overline{\omega}_0-\lambda_0)+d (\omega_1-\overline{\omega}_1-\lambda_1)+d (\omega_2-\overline{\omega}_2-\lambda_2).$

If we replace $\omega_1-\overline{\omega}_1$ by $\lambda_1,~\omega_2-\overline{\omega}_2$ by $\lambda_2$, then we get
$(h_3-\overline{h}_3)=d (\omega_1-\overline{\omega}_1-\lambda_1)+d (\omega_2-\overline{\omega}_2-\lambda_2).$

Therefore, we obtain $h-\overline{h}=d(\omega-\overline{\omega}-\lambda)$.
Hence the class of $h$ does not depend on the section $s$.

Next we consider the map
 \begin{equation*}
\xymatrix{
    0\ar[r]^{} & M  \ar@{=}[d]_{} \ar[r]^{i} & V  \ar[d]_{\phi } \ar[r]^{\partial } & \fa  \ar[d]_{\psi}  \ar[r]^{\pi } & \fg  \ar@{=}[d]_{} \ar[r]^{} & 0  \\
   0\ar[r]^{} & M  \ar[r]^{\widetilde{i}} & \widetilde{V}  \ar[r]^{\widetilde{\partial} } & \widetilde{\fa}  \ar[r]^{\widetilde{\pi} } & \fg  \ar[r]^{} & 0,   }
   \end{equation*}
of crossed modules.

Let $\widetilde{s}:\fg\rightarrow\widetilde{\fa}$ and $\widetilde{\sigma}:\widetilde{\fn}\rightarrow \widetilde{V}$ be section of $\widetilde{\pi}$ and $\widetilde{\partial}$, respectively. Note that $(\widetilde{\pi}_0\psi_0 s_0)(a)=(\pi_0 s_0)(a)=a,~(\widetilde{\pi}_1\psi_1 s_1)(b)=(\pi_1 s_1)(b)=b$, for all $a\in\fg_0,~b\in\fg_1$. Therefore, $\psi s:\fg\rightarrow\widetilde{\fa}$ is another section of $\widetilde{\pi}$, then choose another section $\widetilde{\sigma}$, we have
\begin{eqnarray*}
  &&h_0(a_1,a_2,a_3)-\widetilde{h}_0(a_1,a_2,a_3)\\
   &=& \r_0(s_0(\alpha(a_1)))\omega_0(a_2,a_3)+\omega_0(\alpha(a_1),a_2\cdot a_3)\\
&&-\r_0(s_0(\alpha(a_3)))\omega_0(a_1,a_2)-\omega_0(a_1\cdot a_2,\alpha(a_3)),\\
&&-\r_0(\psi_0s_0(\alpha(a_1)))\widetilde{\omega}_0(a_2,a_3)-\widetilde{\omega}_0(\alpha(a_1),a_2\cdot a_3)\\
&&+\r_0(\psi_0s_0(\alpha(a_3)))\widetilde{\omega}_0(a_1,a_2)+\widetilde{\omega}_0(a_1\cdot a_2,\alpha(a_3))\\
&=&\r_0(\psi_0s_0(\alpha(a_1)))\big((\phi_0\sigma_0-\widetilde{\sigma}_0\psi_0)(s_0(a_2)\cdot s_0(a_3)-s_0(a_2\cdot a_3))\big)\\
&&-\r_0(\psi_0s_0(\alpha(a_3)))\big((\phi_0\sigma_0-\widetilde{\sigma}_0\psi_0)(s_0(a_1)\cdot s_0(a_2)-s_0(a_1\cdot a_2))\big)\\
&&+(\phi_0\sigma_0-\widetilde{\sigma}_0\psi_0)\big(s_0(\alpha(a_1))\cdot s_0(a_2\cdot a_3)-s_0(\alpha(a_1)\cdot(a_2\cdot a_3))\big)\\
&&-(\phi_0\sigma_0-\widetilde{\sigma}_0\psi_0)\big(s_0(a_1\cdot a_2)\cdot s_0(\alpha(a_3))-s_0((a_1\cdot a_2)\cdot\alpha(a_3))\big)\\
&=&(d^2\theta_0)(a_1,a_2,a_3),
\end{eqnarray*}
where
\begin{eqnarray*}
\theta_0(a_1,a_2) &=& (\phi_0\sigma_0-\widetilde{\sigma}_0\psi_0)(s_0(a_1)\cdot s_0(a_2)-s_0(a_1\cdot a_2)).
\end{eqnarray*}
One can also check that
$h_1-\widetilde{h}_1=d\theta_0+d\theta_1$, $h_2-\widetilde{h}_2=d\theta_0+d\theta_1+d\theta_2$,
$h_3-\widetilde{h}_3=d\theta_1+d\theta_2$,
where
\begin{eqnarray*}
  \theta_1(a_1,b_1) &=& (\phi_1\sigma_1-\widetilde{\sigma}_1\psi_1)(s_0(a_1)\cdot s_1(b_1)-s_0(a_1\cdot b_1)),\\
  \theta_2(b_1,b_2) &=& (\phi_0\sigma_0-\widetilde{\sigma}_0\psi_0)([s_1(b_1), s_1(b_2)]-s_0[b_1, b_2]).
\end{eqnarray*}
This prove that $h=\widetilde{h}$ in $\mathbf{H}^3(\fg,M)$ and the class of $h$ does not depend on the section $\sigma$. Therefore the map $\xi$ is well defined.
\end{proof}

One would like to establish a bijection between $\mathbf{CExt}(\fg,M)$ and $\mathbf{H^3}(\fg,M)$.
For this, we have to construct a canonical example of crossed extension for a given cohomology class.
We don't have obtained this result yet.
The main  obstacle is that we don't know if category of Hom-Lie antialgebra representations possesses enough injective objects, or the higher cohomology classes of a free Hom-Lie antialgebra are trivial. These problems are left for future investigations.

\appendix
\section{Proof of  equation \eqref{app}}
First we replace $\omega_0-\overline{\omega}_0$ by $\lambda_0$, then
\begin{eqnarray*}
&&(h_0-\overline{h}_0)(a_1,a_2,a_3)\\
&=& f_0(\alpha(a_1))\cdot(s_0(a_2)\cdot s_0(a_3))\underline{A_1}- f_0(\alpha(a_1))\cdot s_0(a_2\cdot a_3)\underline{A_2}\\
&&+\r_0(\overline{s}_0(\alpha(a_1)))\big(\lambda_0(a_2,a_3)\big)\\
&&-f_0(\alpha(a_3))\cdot(s_0(a_1)\cdot s_0(a_2))\underline{A_3}+f_0(\alpha(a_3))\cdot s_0(a_1\cdot a_2)\underline{A_4}\\
&&-\r_0(\overline{s}_0(\alpha(a_3)))\big(\lambda_0(a_1,a_2)\big)\\
&&+\lambda_0(\alpha(a_1),a_2\cdot a_3)-\lambda_0(a_1\cdot a_2,\alpha(a_3))\\
&&+(d (\omega_0-\overline{\omega}_0-\lambda_0))(a_1,a_2,a_3).
\end{eqnarray*}
Second we substitute the definition of $\lambda$ and $\overline{s_0}$ in above equation, we obtain
\begin{eqnarray*}
&&\r_0(\overline{s}_0(\alpha(a_1)))\big(\lambda_0(a_2,a_3)\big)\\
&=&\r_0((s_0-\partial_0f_0)(\alpha(a_1)))\big\{\r_0((s_0-\partial_0f_0)(a_2))f_0(a_3)\\
&&+\r_0((s_0-\partial_0f_0)(a_3))f_0(a_2)
-f_0(a_2)\cdot (\partial_0f_0)(a_3)-f_0(a_2\cdot a_3)\big\}\\
&=&\r_0(s_0(\alpha(a_1)))(\r_0(s_0(a_2))f_0(a_3))\underline{B_1}-\r_0(s_0(\alpha(a_1)))\r_0((\partial_0f_0)(a_2))f_0(a_3)\underline{B_2}\\
&&+\r_0(s_0(\alpha(a_1)))(\r_0(s_0(a_3))f_0(a_2))\underline{B_3}-\r_0(s_0(\alpha(a_1)))\r_0((\partial_0f_0)(a_3))f_0(a_2)\underline{B_4}\\
&&-\r_0(s_0(\alpha(a_1)))((f_0(a_2)\cdot \partial_0f_0(a_3))\underline{B_5}-\r_0(s_0(\alpha(a_1)))f_0(a_2\cdot a_3)\underline{B_6}\\
&&-\r_0((\partial_0f_0)(\alpha(a_1)))(\r_0(s_0(a_2))f_0(a_3))\underline{B_7}\\
&&+\r_0((\partial_0f_0)(\alpha(a_1)))\r_0((\partial_0f_0)(a_2))f_0(a_3)\underline{B_8}\\
&&-\r_0((\partial_0f_0)(\alpha(a_1)))(\r_0(s_0(a_3))f_0(a_2))\underline{B_9}\\
&&+\r_0((\partial_0f_0)(\alpha(a_1)))\r_0((\partial_0f_0)(a_3))f_0(a_2)\underline{B_{10}}\\
&&+\r_0((\partial_0f_0)(\alpha(a_1)))(f_0(a_2)\cdot \partial_0f_0(a_3))\underline{B_{11}}\\
&&+\r_0((\partial_0f_0)(\alpha(a_1)))f_0(a_2\cdot a_3)\underline{B_{12}},
\end{eqnarray*}
\begin{eqnarray*}
&&\r_0(\overline{s}_0(\alpha(a_3)))\big(\lambda_0(a_1,a_2)\big)\\
&=&\r_0((s_0-\partial_0f_0)(\alpha(a_3)))\big\{\r_0((s_0-\partial_0f_0)(a_1))f_0(a_2)\\
&&+\r_0((s_0-\partial_0f_0)(a_2))f_0(a_1)
-f_0(a_1)\cdot (\partial_0f_0)(a_2)-f_0(a_1\cdot a_2)\big\}\\
&=&\r_0(s_0(\alpha(a_3)))(\r_0(s_0(a_1))f_0(a_2))\underline{C_1}-\r_0(s_0(\alpha(a_3)))\r_0((\partial_0f_0)(a_1))f_0(a_2)\underline{C_2}\\
&&+\r_0(s_0(\alpha(a_3)))(\r_0(s_0(a_2))f_0(a_1))\underline{C_3}-\r_0(s_0(\alpha(a_3)))\r_0((\partial_0f_0)(a_2))f_0(a_1)\underline{C_4}\\
&&-\r_0(s_0(\alpha(a_3)))((f_0(a_1)\cdot \partial_0f_0(a_2))\underline{C_5}-\r_0(s_0(\alpha(a_3)))f_0(a_1\cdot a_2)\underline{C_6}\\
&&-\r_0((\partial_0f_0)(\alpha(a_3)))(\r_0(s_0(a_1))f_0(a_2))\underline{C_7}\\
&&+\r_0((\partial_0f_0)(\alpha(a_3)))\r_0((\partial_0f_0)(a_1))f_0(a_2)\underline{C_8}\\
&&-\r_0((\partial_0f_0)(\alpha(a_3)))(\r_0(s_0(a_2))f_0(a_1))\underline{C_9}\\
&&+\r_0((\partial_0f_0)(\alpha(a_3)))\r_0((\partial_0f_0)(a_2))f_0(a_1)\underline{C_{10}}\\
&&+\r_0((\partial_0f_0)(\alpha(a_3)))(f_0(a_1)\cdot \partial_0f_0(a_2))\underline{C_{11}}\\
&&+\r_0((\partial_0f_0)(\alpha(a_3)))f_0(a_1\cdot a_2)\underline{C_{12}},
\end{eqnarray*}
\begin{eqnarray*}
  &&\lambda_0(\alpha(a_1),a_2\cdot a_3)\\
&=& \r_0(\overline{s}_0(\alpha(a_1))f_0(a_2\cdot a_3)+\r_0(\overline{s}_0(a_2\cdot a_3))f_0(\alpha(a_1))\\
&&-f_0(\alpha(a_1))\cdot \partial_0f_0(a_2\cdot a_3))-\partial_0f_0(\alpha(a_1)\cdot (a_2\cdot a_3))\\
&=&\r_0(s_0(\alpha(a_1))f_0(a_2\cdot a_3)\underline{D_1}-\r_0((\partial_0f_0)(\alpha(a_1)))f_0(a_2\cdot a_3)\underline{D_2}\\
&&+\r_0(s_0(a_2\cdot a_3))f_0(\alpha(a_1))\underline{D_3}-\r_0((\partial_0f_0)(a_2\cdot a_3))f_0(\alpha(a_1))\underline{D_4}\\
&&-f_0(\alpha(a_1))\cdot \partial_0f_0(a_2\cdot a_3)\underline{D_5}-\partial_0f_0(\alpha(a_1)\cdot (a_2\cdot a_3))\underline{D_6},
\end{eqnarray*}
\begin{eqnarray*}
  &&\lambda_0(a_1\cdot a_2,\alpha(a_3))\\
&=& \r_0(\overline{s}_0(a_1\cdot a_2)f_0(\alpha(a_3))+\r_0(\overline{s}_0(\alpha(a_3)))f_0(a_1\cdot a_2)\\
&&-f_0(a_1\cdot a_2)\cdot \partial_0f_0(\alpha(a_3)))-\partial_0f_0(a_1\cdot a_2\cdot (\alpha(a_3)))\\
&=&\r_0(s_0(a_1\cdot a_2)f_0(\alpha(a_3))\underline{E_1}-\r_0((\partial_0f_0)(a_1\cdot a_2))f_0(\alpha(a_3))\underline{E_2}\\
&&+\r_0(s_0(\alpha(a_3)))f_0(a_1\cdot a_2)\underline{E_3}-\r_0((\partial_0f_0)(\alpha(a_3)))f_0(a_1\cdot a_2)\underline{E_4}\\
&&-f_0(a_1\cdot a_2)\cdot \partial_0f_0(\alpha(a_3)\underline{E_5}-\partial_0f_0(a_1\cdot a_2\cdot (\alpha(a_3)))\underline{E_6}.
\end{eqnarray*}
Due to \eqref{AssCommT0}, \eqref{a} and \eqref{cm1}, we have
$$\underline{A_1}-\underline{C_3}=0,~~\underline{A_2}+\underline{D_3}=0,~~\underline{A_3}+\underline{B_1}=0,
~~\underline{A_4}-\underline{E_1}=0,~~\underline{B_3}-\underline{C_1}=0,~~\underline{B_9}-\underline{C_2}=0,$$
$$\underline{B_2}-\underline{C_7}=0,~~\underline{D_6}-\underline{E_6}=0,~~\underline{B_7}-\underline{C_9}=0,
~~\underline{B_8}-\underline{C_8}=0,~~\underline{B_{10}}-\underline{C_{10}}=0,$$
$$-\underline{C_6}-\underline{E_3}=0,~~\underline{B_6}+\underline{D_1}=0,~~\underline{B_{12}}+\underline{D_2}=0,~~-\underline{C_{12}}-\underline{E_2}=0.$$
Thus we get
\begin{eqnarray*}
 &&(h_0-\overline{h}_0)(a_1,a_2,a_3)\\
 &=&-\r_0(s_0(\alpha(a_1)))\r_0((\partial_0f_0)(a_3))f_0(a_2)-\r_0(s_0(\alpha(a_1)))((f_0(a_2)\cdot \partial_0f_0(a_3))\\
&&+\r_0((\partial_0f_0)(\alpha(a_1)))(f_0(a_2)\cdot \partial_0f_0(a_3))+\r_0(s_0(\alpha(a_3)))\r_0((\partial_0f_0)(a_2))f_0(a_1)\\
&&+\r_0(s_0(\alpha(a_3)))((f_0(a_1)\cdot \partial_0f_0(a_2))-\r_0((\partial_0f_0)(\alpha(a_3)))(f_0(a_1)\cdot \partial_0f_0(a_2))\\
&&-\r_0((\partial_0f_0)(\alpha(a_1)))f_0(a_2\cdot a_3)-f_0(\alpha(a_1))\cdot \partial_0f_0(a_2\cdot a_3)\\
&&+\r_0((\partial_0f_0)(a_1\cdot a_2))f_0(\alpha(a_3))+f_0(a_1\cdot a_2)\cdot \partial_0f_0(\alpha(a_3))\\
&&+(d (\omega_0-\overline{\omega}_0-\lambda_0))(a_1,a_2,a_3).
\end{eqnarray*}
Using \eqref{cm1} again, we  have
 $(h_0-\overline{h}_0)(a_1,a_2,a_3)=d (\omega_0-\overline{\omega}_0-\lambda_0)(a_1,a_2,a_3)$.

\subsection*{Acknowledgements}
The authors would like to thank the referee for careful reading of the paper and for valuable comments.
This research was supported by NSFC(11501179,11601219).


\begin{thebibliography}{99}


\bibitem{Alp}
M. Alp, {\it Pullback crossed modules of algebroids}, Iran. J. Sci. Tech. 32(2008), 1--5.

\bibitem{Bau}
H-J. Baues and E. G. Minian, {\it  Crossed extensions of algebras and Hochschild cohomology}, Homol. Homotopy Appl.  4(2)(2002), 63--82.

\bibitem{B1}
F. Borceux, G. Janelidze, and G. M. Kelly, {\it Internal object actions}, Comment. Math. Univ. Carolinae 46 (2005), 235--255.

\bibitem{B2} D. Bourn and G. Janelidze, {\it Protomodularity, descent and semidirect products}, Theory Appl. Categ. 4 (1998), 37--46.


\bibitem{Cas}
J. M. Casas, {\it Crossed extensions of Leibniz algebras}, Comm. Algebra 27(12)(1999), 6253--6272.

\bibitem{CM}
J. M. Casas and X. Garcia-Martinez, {\it Abelian extensions and crossed modules of Hom-Lie algebras},  J. Pure Appl. Algebra 224(3)(2020), 987--1008.

\bibitem{Ell}
G. J. Ellis, {\it Higher dimensional crossed modules of algebras}, J. Pure Appl. Algebra 52(1988), 277--282.


\bibitem{HLS}
J. Hartwig, D. Larsson and S. Silvestrov,
{\it Deformations of Lie algebras using $\sigma$-derivations}, J. Algebra {295} (2004), 314--361.

\bibitem{Ger}
 M. Gerstenhaber, {\it On the deformation of rings and algebras: II.},  Ann. of Math. 84(1966), 1--19.

\bibitem{Jan1}
G. Janelidze, {\it Internal crossed modules}, Georgian Math. J. 10 (2003), 99--114.

\bibitem{Jan2}
G. Janelidze, L. Marki, and W. Tholen, {\it Semi-abelian categories}, J. Pure Appl. Algebra 168 (2002), 367--386.

\bibitem{KL}
C. Kassel and J.-L. Loday, {\it Extensions centrales d'alg\`ebres de Lie}, Ann. Inst. Fourier (Grenoble) 32 (1982), 119--142.


\bibitem{LM}
S. Leidwanger and S. Morier-Genoud, {\it Universal enveloping algebra of Lie antialgebras}, Algebr. Represent. Theory {15} (2012), 1--27.


\bibitem{Lue}
A. S.-T. Lue, {\it Non-abelian cohomology of associative algebras}, Quart. J. Math. Oxford Ser.  19(2) (1968), 159--180.


\bibitem{Ovs}
V. Ovsienko, {\it Lie antialgebras: pr\'emices},  J. Algebra {325} (2011), 216--247.


\bibitem{Port}
T. Porter, {\it Extensions, crossed modules and internal categories in categories of groups with operations}, Proc. Edinburgh Math. Soc. 30 (2)  (1987), 373--381.


\bibitem{Sheng}
Y. Sheng, {\it Representations of hom-Lie algebras}, Algebr. Represent. Theory 15 (2012), 1081--1098.

\bibitem{Wag}
F. Wagemann,  {\it On Lie algebra crossed modules}, Comm. Algebra 34(2006), 1699--1722.


\bibitem{Wh}
J. H. C. Whitehead, {\it Combinatorial homotopy II}, Bull. Amer. Math. Soc. 55 (1949), 453--496.

\bibitem{Yau}
D. Yau, {\it Hom-algebras and homology}, J. Lie Theory 19 (2009), 409--421.

\bibitem{Zhang}
T. Zhang, F.Y. Han and Y.H. Bi,
{\it Crossed modules for Hom-Lie-Rinehart algebras}, Colloq. Math. {152} (2018), 1--14.

\bibitem{ZZ}
T. Zhang and H.Y. Zhang ,
{\it On Hom-Lie antialgebra},  Comm. Algebra 48(8)(2020), 3204--3221.

\bibitem{ZZ2}
T. Zhang and D.S. Zhong, {\it Universal central extensions of Hom-Lie antialgebras}, arXiv:1907.12886

\end{thebibliography}
\end{document}